\newcommand{\R}{\mathbb{R}}
\numberwithin{equation}{section}
\newcommand{\beq}{\begin{equation} }
\newcommand{\eqq}{\end{equation} }
\newcommand{\cuad}{{\sqcap\kern-.68em\sqcup}}
\newcommand{\equ}[1]{(\ref{#1})}
\def\beeq{\begin{equation}}
\def\eeq{\end{equation}}
\newcommand{\begeqaet}{\begin{eqnarray*}}
\newcommand{\eneqaet}{\end{eqnarray*}}
\let\Section=\section
\def\section{\setcounter{equation}{0}\Section}
\newtheorem{Lem}{Lemma}[section]
\newtheorem{Thm}{Theorem}[section]
\newtheorem{Prop}{Proposition}[section]
\newtheorem{Remark}{Remark}[section]
\begin{document}
\begin{center}{\bf\Large A critical nonlinear elliptic equation with non local regional diffusion }\medskip

\bigskip

\bigskip

{C\'esar E. Torres Ledesma}

Departamento de Matem\'aticas, \\
Universidad Nacional de Trujillo,\\
Av. Juan Pablo II s/n. Trujillo-Per\'u\\
{\sl  ctl\_576@yahoo.es}


\medskip

\medskip
\medskip
\medskip
\medskip

\end{center}

\centerline{\bf Abstract}

\medskip

In this article we are interested in  the nonlocal regional Schr\"odinger equation with critical exponent  
\begin{eqnarray*}
&\epsilon^{2\alpha} (-\Delta)_{\rho}^{\alpha}u + u = \lambda u^q + u^{2_{\alpha}^{*}-1} \mbox{ in } \mathbb{R}^{N},   \\
&
u \in H^{\alpha}(\mathbb{R}^{N}),
\end{eqnarray*}
where $\epsilon$ is a small positive parameter, $\alpha \in (0,1)$, $q\in (1,2_{\alpha}^{*}-1)$, $2_{\alpha}^{*} = \frac{2N}{N-2\alpha}$ is the critical Sobolev exponent, $\lambda >0$ is a parameter and $(-\Delta)_{\rho}^{\alpha}$ is a variational version of the regional laplacian, whose range of scope is a ball with radius $\rho(x)>0$. We study the existence of a ground state and we analyze the behavior of semi-classical solutions as $\varepsilon\to 0$. 

\medskip

\date{}

\setcounter{equation}{0}
\section{ Introduction}

In the present paper, we consider the existence and concentration phenomena of solutions to the nonlinear Schr\"odinger equation with non local regional diffusion  
\begin{eqnarray}\label{I01}
&\epsilon^{2\alpha} (-\Delta)_{\rho}^{\alpha}u + u = \lambda u^q + u^{2_{\alpha}^{*}-1} \mbox{ in } \mathbb{R}^{N},   \\
&
u \in H^{\alpha}(\mathbb{R}^{N}),\nonumber
\end{eqnarray}
where $\epsilon$ is a small positive parameter, $\alpha \in (0,1)$, $q\in (1,2_{\alpha}^{*}-1)$, $2_{\alpha}^{*} = \frac{2N}{N-2\alpha}$ is the critical Sobolev exponent, $\lambda >0$ is a parameter and  the operator $(-\Delta)_{\rho}^{\alpha}$ is a variational version of the non-local regional laplacian, with range of scope determined by the positive function $\rho \in C(\mathbb{R}^n, \mathbb{R}^+)$, which is defined as
$$
\int_{\mathbb{R}^n}(-\Delta)_{\rho}^{\alpha}u(x)\varphi (x)dx = \int_{\mathbb{R}^n}\int_{B(0, \rho (x))} \frac{[u(x+z) - u(z)][\varphi (x+z) - \varphi (x)]}{|z|^{n+2\alpha}}dxdx
$$ 

Now we make precise assumptions on the scope function $\rho$  we assume $\rho \in C(\mathbb{R}^{n},\mathbb{R}^{+})$ and it satisfies the following hypotheses:
\begin{itemize}
\item[$(\rho_{1})$] There are numbers $0<\rho_0 < \rho_\infty \leq \infty$ such that
$$\rho_{0 } \leq \rho (x) < \rho_\infty, \quad \forall x \in \mathbb{R}^{n} \quad\mbox{and}\quad
\lim_{|x| \to \infty} \rho (x) =\rho_\infty.
$$
\item[$(\rho_{2})$] For any $x_{0}\in \mathbb{R}^{n}$, the equation  
$$
|x| = \rho (x+x_{0}),\quad x\in\R^n,
$$
defines an $(n-1)$-dimensional surface of class $C^1$ in $\mathbb{R}^{n}$.
\item[$(\rho_{3})$] In case $\rho_\infty=\infty$ we further assume that there exists $a\in (0,1)$ such that
$$
\limsup_{|x| \to \infty} \frac{\rho (x)}{|x|}\le a.
$$

\end{itemize}

\section{Preliminaries}

For any $\alpha \in (0,1)$, the fractional Sobolev space $H^\alpha (\mathbb{R}^N)$ is defined by
$$
H^{\alpha}(\mathbb{R}^N) = \left\{ u\in L^2(\mathbb{R}^N):\;\; \frac{|u(x) - u(z)|}{|x-z|^{\frac{N+2\alpha}{2}}}  \in L^2(\mathbb{R}^N\times \mathbb{R}^N)\right\}
$$
endowed with the norm
$$
\|u\|_{\alpha} = \left(\int_{\mathbb{R}^N}|u(x)|^2dx + \int_{\mathbb{R}^N}\int_{\mathbb{R}^N}\frac{|u(x)-u(z)|^2}{|x-z|^{N+2\alpha}}dz dx \right)^{1/2}.
$$
For the reader's convenience, we review the main embedding result for $H^\alpha (\mathbb{R}^N)$.
\begin{Lem}\label{FSE}
\cite{EDNGPEV} Let $\alpha \in (0,1)$ such that $2\alpha < N$. Then there exist a constant $\mathfrak{C} = \mathfrak{C}(N, \alpha)>0$, such that
$$
\|u\|_{L^{2_{\alpha}^{*}}} \leq \mathfrak{C}\|u\|_{\alpha}
$$
for every $u\in H^{\alpha}(\mathbb{R}^N)$. Moreover, the embedding $H^\alpha (\mathbb{R}^N) \subset L^p(\mathbb{R}^N)$ is continuous for any $p\in [2, 2_{\alpha}^{*}]$ and is locally compact whenever $p\in [2, 2_{\alpha}^{*})$. 
\end{Lem}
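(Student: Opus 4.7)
The plan is to establish each assertion of Lemma~\ref{FSE} in turn: first the critical Sobolev inequality, then continuous embeddings at intermediate exponents via H\"older interpolation, and finally local compactness in the subcritical range via the Fr\'echet--Kolmogorov criterion.

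For the critical inequality, I would pass to the Fourier side. Plancherel's theorem together with the well-known identity
\begin{equation*}
\int_{\mathbb{R}^N}\!\!\int_{\mathbb{R}^N}\frac{|u(x)-u(z)|^{2}}{|x-z|^{N+2\alpha}}\,dz\,dx \;=\; C(N,\alpha)\int_{\mathbb{R}^N}|\xi|^{2\alpha}|\widehat{u}(\xi)|^{2}\,d\xi
\end{equation*}
shows that $\|u\|_{\alpha}$ is equivalent to $\|(1+|\xi|^{2\alpha})^{1/2}\widehat{u}\|_{L^{2}}$. Writing $u=I_{\alpha}g$ with $g=(-\Delta)^{\alpha/2}u$, i.e.\ $\widehat{g}(\xi)=|\xi|^{\alpha}\widehat{u}(\xi)$, the Hardy--Littlewood--Sobolev inequality applied to the Riesz potential $I_{\alpha}$ yields
\begin{equation*}
\|u\|_{L^{2_{\alpha}^{*}}} \;=\; \|I_{\alpha}g\|_{L^{2_{\alpha}^{*}}} \;\leq\; \mathfrak{C}\,\|g\|_{L^{2}} \;\leq\; \mathfrak{C}\,\|u\|_{\alpha},
\end{equation*}
which is the desired estimate. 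For any intermediate exponent $p\in(2,2_{\alpha}^{*})$, pick $\theta\in(0,1)$ with $\frac{1}{p}=\frac{\theta}{2}+\frac{1-\theta}{2_{\alpha}^{*}}$ and apply H\"older: $\|u\|_{L^{p}} \leq \|u\|_{L^{2}}^{\theta}\|u\|_{L^{2_{\alpha}^{*}}}^{1-\theta}\leq C\|u\|_{\alpha}$, giving the continuous embedding for all $p\in[2,2_{\alpha}^{*}]$.

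For local compactness with $p\in[2,2_{\alpha}^{*})$, let $\Omega\subset\mathbb{R}^{N}$ be bounded and $\{u_{n}\}$ bounded in $H^{\alpha}(\mathbb{R}^{N})$. The Fr\'echet--Kolmogorov--Riesz theorem reduces the problem to checking uniform equi-integrability in $L^{p}(\Omega)$ and uniform control of translations. A direct estimate starting from the Gagliardo seminorm (via the change of variable $y=x+h-z$) produces
\begin{equation*}
\|u_{n}(\cdot+h)-u_{n}\|_{L^{2}(\mathbb{R}^{N})}^{2} \;\leq\; C|h|^{2\alpha}\,[u_{n}]_{H^{\alpha}}^{2},
\end{equation*}
so translations tend to zero in $L^{2}$ uniformly in $n$; interpolating this with the uniform $L^{2_{\alpha}^{*}}$ bound from the first step yields the same conclusion in $L^{p}(\Omega)$ for every $p<2_{\alpha}^{*}$, after which a truncation argument confined to a large ball containing $\Omega$ delivers relative compactness.

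The main obstacle is the critical inequality itself, whose proof ultimately rests on the Hardy--Littlewood--Sobolev (or, equivalently, a direct fractional chain-rule/rearrangement) argument and is the substantive content of the statement; everything else follows by standard interpolation and measure-theoretic compactness machinery. A related subtlety, important for the rest of the paper, is the \emph{failure} of compactness precisely at $p=2_{\alpha}^{*}$: the scale invariance of the critical norm under $u\mapsto \lambda^{(N-2\alpha)/2}u(\lambda\cdot)$ prevents any uniform translation-control argument from closing at the endpoint, which is exactly what forces the concentration-compactness analysis the author will have to perform later.
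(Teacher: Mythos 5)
Your argument is correct, but note that the paper itself offers no proof of this lemma: it is quoted verbatim from the cited reference \cite{EDNGPEV}, so there is no internal proof to compare against. Measured against that reference, your route is genuinely different at the one substantive point. You derive the critical inequality $\|u\|_{L^{2_{\alpha}^{*}}}\leq \mathfrak{C}\|u\|_{\alpha}$ by passing to the Fourier side, identifying the Gagliardo seminorm with $\int_{\mathbb{R}^N}|\xi|^{2\alpha}|\widehat{u}(\xi)|^{2}\,d\xi$, and invoking Hardy--Littlewood--Sobolev for the Riesz potential $I_{\alpha}$; this is the classical Sobolev--Stein argument and is perfectly valid (one should only remark that the representation $u=I_{\alpha}g$ is first justified on a dense class and extended by the a priori bound). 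The cited reference instead proves the same inequality by an elementary covering/Chebyshev argument in the spirit of Savin--Valdinoci, entirely avoiding the Fourier transform and HLS; that proof is longer but self-contained and adapts to domains, whereas yours is shorter and makes the scaling structure transparent. Your remaining steps --- H\"older interpolation for $p\in(2,2_{\alpha}^{*})$, the translation estimate $\|u(\cdot+h)-u\|_{L^{2}}^{2}\leq C|h|^{2\alpha}[u]_{H^{\alpha}}^{2}$ (which follows from $|e^{i\xi\cdot h}-1|^{2}\leq 4\min\{1,|\xi|^{2}|h|^{2}\}\leq 4|h|^{2\alpha}|\xi|^{2\alpha}$), and the Fr\'echet--Kolmogorov criterion on bounded sets --- coincide with the standard treatment and close the proof; your closing observation that compactness genuinely fails at $p=2_{\alpha}^{*}$ by scale invariance is accurate and is indeed the reason the paper later needs the strict inequality $C<\frac{\alpha}{n}S^{n/2\alpha}$.
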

Furthermore, we introduce the homogeneous fractional Sobolev space 
$$
\begin{aligned}
H_{0}^{\alpha}(\mathbb{R}^N) &= \left\{u\in L^{2_{\alpha}^{*}}(\mathbb{R}^N):\;\; |\xi|^{\alpha} \widehat{u} \in L^2(\mathbb{R}^N)  \right\}\\
&= \overline{C_{0}^{\infty}(\mathbb{R}^N)}^{\|.\|_{0}},
\end{aligned}
$$
where 
$$
\|u\|_{0}^2 = \int_{\mathbb{R}^N} |\xi|^{2\alpha}|\widehat{u}(\xi)|^2d\xi.
$$
Now we consider the best Sobolev constant $S$ as follows:
\begin{equation}\label{P01}
S = \inf_{u\in H_{0}^{\alpha}(\mathbb{R}^N)\setminus \{0\}} \frac{\int_{\mathbb{R}^N}\int_{\mathbb{R}^N}\frac{|u(x) - u(z)|^2}{|x-z|^{N+2\alpha}}dz dx}{\left( \int_{\mathbb{R}^N}|u(x)|^{2_{\alpha}^{*}}dx\right)^{2/2_{\alpha}^{*}}}.
\end{equation}
According to \cite{ACNT}, $S$ is attained by the function $u_0(x)$ given by
\begin{equation}\label{P02}
u_0(x) = \frac{c}{(\theta^2 + |x-x_0|^2)^{\frac{N-2\alpha}{2}}}, \;\;x\in \mathbb{R}^N,
\end{equation} 
where $c\in \mathbb{R}\setminus \{0\}$, $\theta >0$ and $x_0\in \mathbb{R}^N$ are fixed constants. For any $\epsilon >0$ and $x\in \mathbb{R}^N$, let
$$
U_\epsilon (x) = \epsilon^{-\frac{N-2\alpha}{2}}\tilde{u}\left( \frac{x}{\epsilon S^{1/2\alpha}} \right), \;\;\tilde{u}(x) = \frac{u_0(x)}{\|u_0\|_{L^{2_{\alpha}^{*}}}}
$$  
which is solution of the problem 
$$
(-\Delta)^{\alpha}u = |u|^{2_{\alpha}^{*} - 2}u,\;\;x\in \mathbb{R}^N.
$$

Given a function $\rho$ as above, we define
\begin{equation}\label{norma2}
\|u\|^2_\rho=\int_{\mathbb{R}^{n}}\int_{B(0,\rho (x))}\frac{|u(x) - u(z)|^{2}}{|x-z|^{n+2\alpha}}dzdx + \int_{\mathbb{R}^{n}}u(x)^{2}dx
\end{equation}
and the space
$$
H_{\rho}^{\alpha}(\mathbb{R}^{n})=\{u\in L^2(\mathbb{R}^{n})\,/\, \|u\|^2_\rho<\infty\}.
$$
This space is very natural for the study of our problem. Furthermore, we have the following result
\begin{Prop}\label{PFCTprop}
\cite{PFCT1} If $\rho$ satisfies ($H_{1}$) there exists a constant $C = C(n, \alpha, \rho_{0})> 0$ such that
$$
\|u\|_{\alpha} \leq C \|u\|_{\rho}.
$$
\end{Prop}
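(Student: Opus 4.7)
The $L^2$ contributions to $\|u\|_\alpha^2$ and $\|u\|_\rho^2$ coincide, so the task reduces to controlling the full Gagliardo seminorm
\[
[u]_\alpha^2 \;=\; \int_{\mathbb{R}^n}\int_{\mathbb{R}^n}\frac{|u(x)-u(y)|^2}{|x-y|^{n+2\alpha}}\,dy\,dx \;=\; \int_{\mathbb{R}^n}\int_{\mathbb{R}^n}\frac{|u(x+z)-u(x)|^2}{|z|^{n+2\alpha}}\,dz\,dx
\]
by the regional seminorm plus an $L^2$ term. The natural strategy is to split the inner integration at the uniform radius $\rho_0$ supplied by hypothesis $(\rho_1)$, writing $[u]_\alpha^2 = I_{\mathrm{near}} + I_{\mathrm{far}}$ corresponding to $|z|<\rho_0$ and $|z|\ge \rho_0$.

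For the near-field piece, the lower bound $\rho(x)\ge \rho_0$ in $(\rho_1)$ gives $B(0,\rho_0)\subset B(0,\rho(x))$ for every $x\in\mathbb{R}^n$, so $I_{\mathrm{near}}$ is pointwise dominated (in $x$) by the regional Gagliardo integral appearing in $\|u\|_\rho^2$. Hence $I_{\mathrm{near}}\le \|u\|_\rho^2$ at no cost.

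For the far-field piece, I would use the elementary inequality $|u(x+z)-u(x)|^2 \le 2|u(x+z)|^2 + 2|u(x)|^2$, exchange the order of integration, and apply translation invariance of the Lebesgue measure to factor both terms as $\|u\|_{L^2}^2 \cdot \int_{|z|\ge \rho_0}|z|^{-n-2\alpha}\,dz$. The remaining radial integral equals $\omega_{n-1}/(2\alpha\rho_0^{2\alpha})$, producing a constant of the form $C(n,\alpha)\rho_0^{-2\alpha}$. Combining this with the near-field bound and the matching $L^2$ term gives $\|u\|_\alpha^2 \le C(n,\alpha,\rho_0)\,\|u\|_\rho^2$.

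The argument is a routine near/far splitting; no genuine obstacle is anticipated. The only way the hypotheses enter is through the uniform positivity $\rho_0 > 0$ in $(\rho_1)$, which is precisely what guarantees that the cut-off radius can be chosen independently of $x$ — without it the near-field comparison fails. Conditions $(\rho_2)$ and $(\rho_3)$ play no role here, as expected for a purely functional-analytic norm comparison.
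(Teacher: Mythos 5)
Your argument is correct and is essentially the proof given in the cited reference \cite{PFCT1} (the present paper states the proposition without proof, deferring to that source): one splits the Gagliardo seminorm at the uniform radius $\rho_0$ from $(\rho_1)$, absorbs the near-field part into the regional seminorm, and bounds the far-field part by $C(n,\alpha)\rho_0^{-2\alpha}\|u\|_{L^2}^2$ via $|u(x+z)-u(x)|^2\le 2|u(x+z)|^2+2|u(x)|^2$ and translation invariance. All steps, including the computation of the tail integral and the observation that only the lower bound $\rho\ge\rho_0$ is used, check out.
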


\begin{Remark}\label{FSnta1}
 By Proposition \ref{PFCTprop} we have that $H_{\rho}^{\alpha}(\mathbb{R}^{n}) \hookrightarrow H^{\alpha}(\mathbb{R}^{n})$ is continuous and  then, by  Theorem \ref{FSE}, we have that
    $H_{\rho}^{\alpha}(\mathbb{R}^{n}) \hookrightarrow L^{q}(\mathbb{R}^{n})$ is continuous for any $q\in [2,2_{\alpha}^{*}]$, and there exists $\mathfrak{C}_{q}>0$ such that 
$$
\|u\|_{L^q} \leq \mathfrak{C}_q\|u\|_{\rho},\;\;\mbox{for every}\;\;u\in H_{\rho}^{\alpha}(\R^n)\;\;\mbox{and}\;\;q\in [2,2_{\alpha}^{*}].
$$ 
Furthermore $H_{\rho}^{\alpha}(\mathbb{R}^{n}) \hookrightarrow L_{loc}^{q}(\mathbb{R}^{n})$ is compact for any $q\in [2,2_{\alpha}^{*})$.
  \end{Remark}
    \begin{Remark}\label{FSnta1x}
    Since $\|u\|_{\rho} \leq \|u\|_\alpha$, under the condition $(\rho_{1})$  Proposition \ref{PFCTprop} implies  $\|\cdot\|_{\rho}$
    and  $\|\cdot\|$ are equivalent norms in  $H^{\alpha} (\mathbb{R}^{n})$.
\end{Remark}

\begin{Lem}\label{CClem}
\cite{PFCT1} Let $n\geq 2$. Assume that $\{u_n\}_{n\in \mathbb{N}}$ is bounded in $H_{\rho}^{\alpha}(\mathbb{R}^N)$ and it satisfies 
$$
\lim_{n\to \infty}\sup_{y\in \mathbb{R}^N} \int_{B(y, R)} |u_n(x)|^2dx = 0,
$$
where $R>0$. Then $u_n \to 0$ in $L^q(\mathbb{R}^N)$ for $q\in (2, 2_{\alpha}^{*})$.
\end{Lem}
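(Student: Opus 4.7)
The plan is to use Proposition \ref{PFCTprop} to reduce to the classical Lions-type vanishing lemma for $H^\alpha(\mathbb{R}^N)$ and then execute a covering argument with a distinguished ``magic'' exponent. Since $\|u_n\|_\alpha \le C\|u_n\|_\rho$, the sequence $\{u_n\}$ is bounded in $H^\alpha(\mathbb{R}^N)$ as well, so we may work in that larger space.

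First, cover $\mathbb{R}^N$ by a countable family of balls $\{B(y_i,R)\}_i$ with uniformly finite overlap, so that the enlarged family $\{B(y_i,2R)\}_i$ also has finite overlap. Fix the exponent $p_0 = 2 + 4\alpha/N$, which lies strictly between $2$ and $2_\alpha^*$. The H\"older interpolation inequality on each $B_i = B(y_i,R)$, together with the localized fractional Sobolev embedding $\|u\|_{L^{2_\alpha^*}(B_i)} \le C\|u\|_{H^\alpha(B(y_i,2R))}$, yields
\[
\int_{B_i} |u_n|^{p_0}\,dx \le C\left(\int_{B_i}|u_n|^2\,dx\right)^{2\alpha/N} \|u_n\|_{H^\alpha(B(y_i,2R))}^{2}.
\]
The exponents arise from the elementary identity $1/p_0 = (1-\theta)/2 + \theta/2_\alpha^*$, which for this choice of $p_0$ forces $p_0\theta = 2$ and $p_0(1-\theta) = 4\alpha/N$. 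Bounding the $L^2$ factor uniformly by $\sup_y \int_{B(y,R)} |u_n|^2\,dx$ and summing over $i$, together with the fact that finite overlap of the enlarged balls keeps $\sum_i \|u_n\|_{H^\alpha(B(y_i,2R))}^2 \le C \|u_n\|_\alpha^2$, gives
\[
\|u_n\|_{L^{p_0}(\mathbb{R}^N)}^{p_0} \le C\left(\sup_{y\in\mathbb{R}^N}\int_{B(y,R)}|u_n|^2\,dx\right)^{2\alpha/N} \|u_n\|_\alpha^{2} \xrightarrow[n\to\infty]{} 0.
\]

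For an arbitrary $q\in(2,2_\alpha^*)$, a standard H\"older interpolation of $L^q$ between $L^{p_0}$ and either $L^2$ (if $q<p_0$) or $L^{2_\alpha^*}$ (if $q>p_0$), combined with the uniform $H^\alpha$ bound and Lemma \ref{FSE}, delivers $u_n\to 0$ in $L^q(\mathbb{R}^N)$. The main obstacle is the localized fractional Sobolev inequality on balls and the non-additive nature of the nonlocal $H^\alpha$ seminorm across the cover: unlike the classical case, $\iint_{B\times B} |u(x)-u(y)|^2/|x-y|^{N+2\alpha}\,dx\,dy$ does not split additively, and the estimate requires either an extension argument from $B(y_i,2R)$ into $\mathbb{R}^N$ or a cutoff procedure whose cross terms are absorbed using the finite overlap of the enlarged balls.
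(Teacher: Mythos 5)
The paper offers no proof of this lemma at all---it is quoted verbatim from \cite{PFCT1}---and your argument is precisely the standard Lions-type vanishing proof that the cited source (and its antecedents for the fractional Laplacian) uses: reduce to boundedness in $H^{\alpha}(\mathbb{R}^N)$ via Proposition \ref{PFCTprop}, interpolate on a finite-overlap cover at the distinguished exponent $p_0=2+4\alpha/N$ so that the Sobolev factor appears squared, sum, and then interpolate again for general $q\in(2,2_{\alpha}^{*})$; the exponent bookkeeping $p_0\theta=2$, $p_0(1-\theta)=4\alpha/N$ is correct. The one technical point you flag, the non-additivity of the $H^{\alpha}$ seminorm over the cover, is in fact handled more simply than the extension or cutoff procedure you propose: the monotone one-sided bound $\iint_{B\times B}\le\int_{B}\int_{\mathbb{R}^N}$ already makes the localized seminorms summable, with the finite-overlap constant, against the global seminorm, which is all your inequality requires.
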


Now, we consider the limit equations. namely
\begin{eqnarray}\label{L01}
&(-\Delta)^{\alpha}u + u = \lambda u^q + u^{2_{\alpha}^{*} -1}  \mbox{ in } \mathbb{R}^{N},   \\
&u \in H^{\alpha}(\mathbb{R}^{N}).\nonumber
\end{eqnarray}
This equation was study by Shang, Zhang and Yang in \cite{XSJZYY}. The solution of problem (\ref{L01}) are the critical point of the functional 
$$
I(u) = \frac{1}{2}\|u\|_{\alpha}^{2} - - \frac{\lambda}{q+1}\int_{\mathbb{R}^n}u_+^{q+1}dx - \frac{1}{2_\alpha^{*}}\int_{\mathbb{R}^n} u_+^{2_{\alpha}^{*}}dx.
$$    
Furthermore, they studied the existence of ground state solutions to (\ref{L01}), namely, function in $\mathcal{N}=\{u\in H^{\alpha}(\mathbb{R}^n)\setminus \{0\}: I'(u)u=0\}$ such that
$$
C^* = \inf_{u\in \mathcal{N}} I(u)
$$ 
is achieved and they got the following characterization: 
$$C^* = \inf_{u\in H^\alpha(\mathbb{R}^n)\setminus \{0\}}\sup_{t\geq 0}I(tu) = C,$$
where $C = \inf_{\gamma \in \Gamma}\sup_{t\in [0,1]}I(\gamma (t))>0$ is the mountain pass critical value. 

On the other hand, assuming that if $q\in (1, 2_{\alpha}^{*} - 1)$, they have proved that there exists $\lambda_0>0$ such that for all $\lambda \geq \lambda_0$, the critical value satisfies 
\begin{equation}\label{L02}
0< C < \frac{\alpha}{n} S^{n/2\alpha},
\end{equation}  
where $S$ is the best Sobolev constant given by (\ref{P01}) and problem (\ref{L01}) has a nontrivial ground state solution.

\section{Ground state}

Let $\epsilon =1$ and consider the following problem
\begin{eqnarray}\label{G01}
&(-\Delta)_{\rho}^{\alpha}u + u = \lambda u^q + u^{2_{\alpha}^{*} -1}  \mbox{ in } \mathbb{R}^{N},   \\
&u \in H^{\alpha}(\mathbb{R}^{N}).\nonumber
\end{eqnarray}

We recall that $u\in H_{\rho}^{\alpha}(\mathbb{R}^n) \setminus \{0\}$ is a solution of (\ref{I01}) if $u(x)\geq 0$ and
$$
\begin{aligned}
\langle u, \varphi \rangle_\rho= \lambda \int_{\mathbb{R}^n}u_+^q\varphi dx + \int_{\mathbb{R}^n} u_+^{2_{\alpha}^{*} - 1}\varphi dx, \;\;\forall \varphi \in H_{\rho}^{\alpha}(\mathbb{R}^n).
\end{aligned}
$$
where
$$
\langle u, \varphi\rangle_{\rho}= \int_{\mathbb{R}^n}\int_{B(0, \rho (x))} \frac{[u(x+z) - u(x)][\varphi (x+z)-\varphi (x)]}{|z|^{n+2\alpha}}dz dx + \int_{\mathbb{R}^n}u\varphi dx
$$
and $u_+= \max\{u, 0\}$.

In order to find solution for problem (\ref{G01}), we consider the functional $I_{\rho}: H_{\rho}^{\alpha}(\mathbb{R}^n) \to \mathbb{R}$ defined as
$$
I_{\rho}(u) = \frac{1}{2}\|u\|_{\rho}^2 - \frac{\lambda}{q+1}\int_{\mathbb{R}^n}u_+^{q+1}dx - \frac{1}{2_\alpha^{*}}\int_{\mathbb{R}^n} u_+^{2_{\alpha}^{*}}dx,
$$
which is well defined and belongs to $C^1(H_{\rho}^{\alpha}(\mathbb{R}^n), \mathbb{R}^n)$ with Fr\'echet derivative
$$
I'_{\rho}(u)v = \langle u, v\rangle_{\rho} - \lambda \int_{\mathbb{R}^n}u_+^qvdx - \int_{\mathbb{R}^n}u_{+}^{2_{\alpha}^{*}-1}vdx. 
$$. 

Now, we start recalling that the functional $I_{\rho}$ satisfies the mountain pass geometry conditions 

\begin{Lem}\label{Glm1}
The functional $I_{\rho_\epsilon}$ satisfies the following conditions:
\begin{enumerate}
\item There exist $\beta, \delta>0$, such that $I_{\rho}(u) \geq \beta$ if $\|u\|_{\rho} = \delta$.
\item There exists an $e\in H_{\rho}^{\alpha}(\mathbb{R}^n)$ with $\|e\|_{\rho}$ such that $I_{\rho}(e) <0$.
\end{enumerate}
\end{Lem}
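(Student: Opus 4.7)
The plan is a direct verification of the two standard Ambrosetti--Rabinowitz mountain pass conditions using the continuous embeddings recorded in Remark 2.1.

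For part (1), I would estimate the two subcritical/critical terms of $I_\rho(u)$ by the Sobolev embeddings $H_\rho^\alpha(\R^n)\hookrightarrow L^{q+1}(\R^n)$ and $H_\rho^\alpha(\R^n)\hookrightarrow L^{2_\alpha^*}(\R^n)$ from Remark 2.1, obtaining constants $\mathfrak{C}_{q+1}$ and $\mathfrak{C}_{2_\alpha^*}$ with
\[
I_\rho(u) \;\ge\; \tfrac{1}{2}\|u\|_\rho^2 \;-\; \tfrac{\lambda}{q+1}\mathfrak{C}_{q+1}^{q+1}\|u\|_\rho^{q+1} \;-\; \tfrac{1}{2_\alpha^*}\mathfrak{C}_{2_\alpha^*}^{2_\alpha^*}\|u\|_\rho^{2_\alpha^*}.
\]
Because $q+1>2$ and $2_\alpha^*>2$, the right-hand side behaves like $\tfrac12\|u\|_\rho^2\bigl(1-o(1)\bigr)$ as $\|u\|_\rho\to 0$, so I can choose $\delta>0$ small enough that $I_\rho(u)\ge\beta:=\tfrac14\delta^2>0$ whenever $\|u\|_\rho=\delta$.

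For part (2), I would fix any nonnegative $u_0\in H_\rho^\alpha(\R^n)$ with $u_0\not\equiv 0$ (for instance a smooth compactly supported positive bump; Proposition 2.1 together with Remark 2.2 guarantees that this lies in $H_\rho^\alpha$). Along the ray $t\mapsto tu_0$ with $t>0$ one has
\[
I_\rho(tu_0) \;=\; \tfrac{t^2}{2}\|u_0\|_\rho^2 \;-\; \tfrac{\lambda t^{q+1}}{q+1}\!\int_{\R^n}u_0^{q+1}dx \;-\; \tfrac{t^{2_\alpha^*}}{2_\alpha^*}\!\int_{\R^n}u_0^{2_\alpha^*}dx,
\]
and since $2_\alpha^*>2$ the last term dominates as $t\to\infty$, giving $I_\rho(tu_0)\to-\infty$. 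I would then select $t_0$ so large that $e:=t_0u_0$ satisfies both $\|e\|_\rho>\delta$ and $I_\rho(e)<0$.

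There is no serious obstacle here: the lemma is a routine geometric check and every ingredient (continuous embeddings into $L^{q+1}$ and $L^{2_\alpha^*}$, the super-quadratic behavior of both nonlinear terms) is already in place. The only point to keep in mind is that the constants depend on $n,\alpha,\rho_0,\lambda$, but not on the eventual scaling by $\epsilon$, so this argument applies uniformly when one later rescales to $I_{\rho_\epsilon}$.
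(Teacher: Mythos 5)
Your proof is correct and is exactly the standard verification the paper has in mind: the paper states this lemma without proof as a routine consequence of the embeddings in Remark \ref{FSnta1} (giving the strict local minimum geometry at the origin since $q+1>2$ and $2_\alpha^*>2$) and of the superquadratic decay of $I_\rho$ along rays $t\mapsto tu_0$ for a nonnegative $u_0\not\equiv 0$. No gaps; your argument supplies the details the paper omits.
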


From the previous Lemma, by using the mountain pass theorem without $(PS)_c$ condition (\cite{MW}) it follows that there exists a $(PS)_{C_{\rho}}$ sequence $\{u_k\} \subset H_{\rho}^{\alpha}(\mathbb{R}^n)$ such that
\begin{equation}\label{G02}
I_{\rho}(u_k) \to C_{\rho}\quad \mbox{and}\quad I'_{\rho}(u_k) \to 0,
\end{equation}
where 
\begin{equation}\label{G03}
C_{\rho} = \inf_{\gamma \in \Gamma_{\rho}} \sup_{t\in [0,1]} I_{\rho}(\gamma (t)) >0,
\end{equation}
and $\Gamma_{\rho} = \{\gamma \in C([0,1], H_{\rho}^{\alpha}(\mathbb{R}^n)):\;\;\gamma (0) = 0, I_{\rho}(\gamma (1))<0\}$. 

Also, we define 
$$
C^* = \inf_{u\in \mathcal{N}_{\rho}} I_{\rho}(u)
$$
where
$$
\mathcal{N}_{\rho} = \{u\in H_{\rho}^{\alpha}(\mathbb{R}^n)\setminus \{0\}:\;\;I'_{\rho}(u)u=0\}.
$$

\begin{Lem}\label{Glm2}
$C^*>0$
\end{Lem}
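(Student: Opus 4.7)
The plan is a standard two-step Nehari manifold argument: first establish a uniform lower bound on the norm of any element of $\mathcal{N}_\rho$, then rewrite $I_\rho$ on $\mathcal{N}_\rho$ in a form that is manifestly bounded below by a positive multiple of the squared norm.

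First I would fix $u \in \mathcal{N}_\rho$ and use the Nehari identity $I'_\rho(u)u = 0$, i.e.
\[
\|u\|_\rho^2 = \lambda \int_{\mathbb{R}^n} u_+^{q+1}\,dx + \int_{\mathbb{R}^n} u_+^{2_\alpha^*}\,dx .
\]
By the continuous embeddings $H^\alpha_\rho(\mathbb{R}^n) \hookrightarrow L^{q+1}(\mathbb{R}^n)$ and $H^\alpha_\rho(\mathbb{R}^n) \hookrightarrow L^{2_\alpha^*}(\mathbb{R}^n)$ from Remark 2.1, the right-hand side is bounded above by $\lambda \mathfrak{C}_{q+1}^{q+1}\|u\|_\rho^{q+1} + \mathfrak{C}_{2_\alpha^*}^{2_\alpha^*}\|u\|_\rho^{2_\alpha^*}$. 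Dividing by $\|u\|_\rho^2 > 0$ and using that both exponents $q-1$ and $2_\alpha^*-2$ are strictly positive (since $q>1$ and $2_\alpha^*>2$), I obtain
\[
1 \le \lambda \mathfrak{C}_{q+1}^{q+1}\|u\|_\rho^{q-1} + \mathfrak{C}_{2_\alpha^*}^{2_\alpha^*}\|u\|_\rho^{2_\alpha^*-2},
\]
which forces $\|u\|_\rho \ge \delta$ for some $\delta > 0$ independent of $u$.

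Second, I would exploit the algebraic identity on $\mathcal{N}_\rho$ by computing $I_\rho(u) - \frac{1}{q+1} I'_\rho(u)u$, which gives
\[
I_\rho(u) = \left(\frac{1}{2} - \frac{1}{q+1}\right)\|u\|_\rho^2 + \left(\frac{1}{q+1} - \frac{1}{2_\alpha^*}\right)\int_{\mathbb{R}^n} u_+^{2_\alpha^*}\,dx .
\]
Since $2 < q+1 < 2_\alpha^*$, both coefficients are strictly positive, and dropping the non-negative second term yields
\[
I_\rho(u) \ge \left(\frac{1}{2} - \frac{1}{q+1}\right)\delta^2 > 0
\]
uniformly in $u \in \mathcal{N}_\rho$. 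Taking infimum gives $C^* \ge \left(\tfrac{1}{2} - \tfrac{1}{q+1}\right)\delta^2 > 0$, as claimed.

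There is essentially no analytic obstacle here: the argument is entirely algebraic once the Sobolev embeddings of Remark 2.1 are in hand, and the key structural input is simply that both nonlinear exponents exceed $2$ so that the Nehari relation yields a nontrivial lower bound on $\|u\|_\rho$. The one point to watch is to use the exponent $q+1$ (the smaller nonlinear exponent) rather than $2_\alpha^*$ when forming the linear combination, so that the coefficient of $\int u_+^{2_\alpha^*}$ comes out with the correct (positive) sign.
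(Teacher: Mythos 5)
Your proof is correct and uses essentially the same ingredients as the paper's: the Nehari identity combined with the Sobolev embeddings of Remark \ref{FSnta1} to get the uniform lower bound $1 \le \lambda \mathfrak{C}_{q+1}^{q+1}\|u\|_\rho^{q-1} + \mathfrak{C}_{2_\alpha^*}^{2_\alpha^*}\|u\|_\rho^{2_\alpha^*-2}$, together with a linear combination $I_\rho(u) - c\, I'_\rho(u)u$ having positive coefficients. The only differences are cosmetic: you argue directly and quantitatively (obtaining an explicit positive lower bound for $C^*$), while the paper argues by contradiction along a minimizing sequence and subtracts $\tfrac12 I'_\rho(u)u$ instead of $\tfrac{1}{q+1}I'_\rho(u)u$.
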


\begin{proof}
Suppose by contradiction that $C^* = 0$. Then there exists $u_n \in \mathcal{N}_\rho$. Then there exists $u_k \in \mathcal{N}_\rho$ such that
$$
I_\rho (u_k) \to C^* = 0,\;\;\mbox{as}\;\;k \to \infty.
$$
But, since $u_k \in \mathcal{N}_\rho$ we have
$$
\begin{aligned}
I_\rho (u_k) &= I_\rho (u_k) - \frac{1}{2}I'_\rho (u_k)u_k\\
&= \lambda \left( \frac{1}{2}-\frac{1}{q+1} \right)\|u_{k+}\|_{L^{q+1}}^{q+1} + \frac{\alpha}{n} \|u_{k+}\|_{L^{2_{\alpha}^{*}}}^{2_{\alpha}^{*}} \to 0, \;\;\mbox{as}\;\;k \to \infty.
\end{aligned}
$$
So
$$
\|u_{k+}\|_{L^{2_{\alpha}^{*}}} \to 0\;\;\mbox{and}\;\;\|u_{k+}\|_{L^{q+1}} \to 0,\;\;\mbox{as}\;\;k \to \infty.
$$
Therefore
\begin{equation}\label{G04}
\|u_k\|_{\rho} \to 0,\;\;\mbox{as}\;\;k\to \infty.
\end{equation}
On the other hand, since $0\neq u_k \in \mathcal{N}_\rho$, then by Remark \ref{FSnta1} we have
$$
1\leq \lambda C_{q+1}\|u_k\|_{\rho}^{q-1} + C_{2_{\alpha}^{*}}\|u_k\|^{2_{\alpha}^{*} - 2},\;\;\forall k.
$$ 
Combining this inequality with (\ref{G04}) we get a contradiction. This proves the Lemma. 
\end{proof}

\begin{Lem}\label{Glm3}
Let $C_\rho$ given by (\ref{G02}) and (\ref{G03}). Then 
$$C^* = \inf_{u\in H_{\rho}^{\alpha}(\mathbb{R}^n)\setminus \{0\}} \max_{t\geq 0} I_{\rho}(tu) = C_\rho$$.
\end{Lem}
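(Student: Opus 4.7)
The plan is to prove the two equalities $C^{*} = \inf_{u\neq 0} \max_{t \geq 0} I_{\rho}(tu)$ and $C^{*} = C_{\rho}$ via a standard Nehari fibering argument, in four steps.

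\smallskip
\noindent\textbf{Fiber analysis.} I first study the real map $h_{u}(t) = I_{\rho}(tu)$ for $t\geq 0$, with $u\in H_{\rho}^{\alpha}(\mathbb{R}^{n})\setminus\{0\}$ satisfying $u_{+}\not\equiv 0$. One has $h_{u}(0)=0$, $h_{u}(t)>0$ for small $t>0$ (the $\|u\|_{\rho}^{2}$ term dominates), $h_{u}(t)\to -\infty$ as $t\to\infty$ (the critical term dominates because $2_{\alpha}^{*}>2$), and $h_{u}'(t)/t = \|u\|_{\rho}^{2}-\lambda t^{q-1}\|u_{+}\|_{L^{q+1}}^{q+1} - t^{2_{\alpha}^{*}-2}\|u_{+}\|_{L^{2_{\alpha}^{*}}}^{2_{\alpha}^{*}}$ is strictly decreasing since $q-1>0$ and $2_{\alpha}^{*}-2>0$. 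Thus there is a unique $t_{u}>0$ realizing the maximum of $h_{u}$, and $h_{u}'(t_{u})=0$ is exactly $t_{u}u\in\mathcal{N}_{\rho}$. If instead $u_{+}\equiv 0$ and $u\neq 0$, then $h_{u}$ is strictly increasing and $\max_{t\geq 0}I_{\rho}(tu)=+\infty$.

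\smallskip
\noindent\textbf{First equality.} Any $v\in\mathcal{N}_{\rho}$ has $v_{+}\not\equiv 0$ (otherwise $I_{\rho}'(v)v=\|v\|_{\rho}^{2}>0$), so the fiber analysis applies with $t_{v}=1$, giving $I_{\rho}(v)=\max_{t\geq 0}I_{\rho}(tv)$. Taking the infimum, and noting that the $u_{+}\equiv 0$ case contributes $+\infty$, one obtains $C^{*} = \inf_{v\in\mathcal{N}_{\rho}}\max_{t\geq 0}I_{\rho}(tv) = \inf_{u\neq 0}\max_{t\geq 0}I_{\rho}(tu)$.

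\smallskip
\noindent\textbf{Inequality $C_{\rho}\leq C^{*}$.} For each $u$ with $u_{+}\not\equiv 0$ I pick $T$ large enough that $I_{\rho}(Tu)<0$ and take the linear path $\gamma(s)=sTu$, which lies in $\Gamma_{\rho}$. Then $\sup_{s\in[0,1]}I_{\rho}(\gamma(s)) = \max_{t\in[0,T]}I_{\rho}(tu) = I_{\rho}(t_{u}u)$, so $C_{\rho}\leq I_{\rho}(t_{u}u)$; passing to the infimum over $u$ yields $C_{\rho}\leq C^{*}$.

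\smallskip
\noindent\textbf{Inequality $C_{\rho}\geq C^{*}$.} The main step is to show that every $\gamma\in\Gamma_{\rho}$ intersects $\mathcal{N}_{\rho}$. Set $\psi(t)=I_{\rho}'(\gamma(t))\gamma(t)$. From the identity $I_{\rho}(u) = \tfrac{1}{2}I_{\rho}'(u)u + \lambda(\tfrac{1}{2}-\tfrac{1}{q+1})\|u_{+}\|_{L^{q+1}}^{q+1} + (\tfrac{1}{2}-\tfrac{1}{2_{\alpha}^{*}})\|u_{+}\|_{L^{2_{\alpha}^{*}}}^{2_{\alpha}^{*}}$, the hypothesis $I_{\rho}(\gamma(1))<0$ forces $\psi(1)<0$. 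Let $t_{0}=\sup\{t\in[0,1]\,:\,\gamma\equiv 0 \text{ on } [0,t]\}$; for $t$ slightly greater than $t_{0}$ the element $\gamma(t)$ is nonzero with small $\|\cdot\|_{\rho}$-norm, so Remark~\ref{FSnta1} gives $\lambda\|\gamma(t)_{+}\|_{L^{q+1}}^{q+1}+\|\gamma(t)_{+}\|_{L^{2_{\alpha}^{*}}}^{2_{\alpha}^{*}} = o(\|\gamma(t)\|_{\rho}^{2})$, whence $\psi(t)>0$. The intermediate value theorem then produces $t^{*}\in(t_{0},1)$ with $\psi(t^{*})=0$ and $\gamma(t^{*})\neq 0$, i.e.\ $\gamma(t^{*})\in\mathcal{N}_{\rho}$, and hence $\sup_{s}I_{\rho}(\gamma(s))\geq I_{\rho}(\gamma(t^{*}))\geq C^{*}$.

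The delicate point is this last step: one has to avoid the spurious crossing at $\gamma=0$ by isolating the first nonzero time $t_{0}$, and both the sign $\psi(t)>0$ just past $t_{0}$ and $\psi(1)<0$ rely essentially on the superquadratic growth $q+1,\,2_{\alpha}^{*}>2$.
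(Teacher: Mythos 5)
Your fiber analysis, the first equality, and the inequality $C_{\rho}\leq C^{*}$ via linear paths all match the paper's argument. Where you genuinely diverge is the inequality $C_{\rho}\geq C^{*}$: the paper takes the $(PS)_{C_{\rho}}$ sequence $\{u_{k}\}$ produced by the mountain pass theorem, projects it onto $\mathcal{N}_{\rho}$ as $t_{k}u_{k}$, and runs a delicate analysis (equations (\ref{G08})--(\ref{G13})) to show $t_{k}$ stays away from $0$ and $\infty$ and in fact $t_{k}\to 1$, whence $C^{*}\leq I_{\rho}(t_{k}u_{k})=I_{\rho}(u_{k})+o(1)\to C_{\rho}$. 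You instead prove that every $\gamma\in\Gamma_{\rho}$ crosses $\mathcal{N}_{\rho}$, which gives $\sup_{s}I_{\rho}(\gamma(s))\geq C^{*}$ directly. Your route is self-contained and arguably cleaner: it does not need the boundedness of the Palais--Smale sequence (which the paper asserts without proof) nor the $t_{k}\to 1$ bookkeeping, and it yields the slightly stronger statement that the Nehari manifold separates $0$ from $\{I_{\rho}<0\}$.

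There is, however, one repairable flaw in your crossing argument. With $t_{0}=\sup\{t:\gamma\equiv 0\text{ on }[0,t]\}$ you only know that there \emph{exist} times just past $t_{0}$ where $\gamma$ is nonzero and small; $\gamma$ may still return to $0$ afterwards, and the zero of $\psi$ produced by the intermediate value theorem on $(t_{0},1)$ could occur at a time where $\gamma$ vanishes (where $\psi=0$ trivially but $\gamma\notin\mathcal{N}_{\rho}$). The standard fix uses the mountain pass geometry of Lemma \ref{Glm1}: choose $\delta>0$ so small that $0<\|u\|_{\rho}\leq\delta$ implies $\psi$-positivity, i.e.\ $I_{\rho}'(u)u\geq\tfrac{1}{2}\|u\|_{\rho}^{2}>0$, and note that $I_{\rho}(\gamma(1))<0$ forces $\|\gamma(1)\|_{\rho}>\delta$. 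Setting $s_{0}=\sup\{t\in[0,1]:\|\gamma(t)\|_{\rho}\leq\delta\}$, one has $\|\gamma(s_{0})\|_{\rho}=\delta$, hence $\psi(s_{0})>0$, while $\|\gamma(t)\|_{\rho}\geq\delta$ for all $t\in[s_{0},1]$, so $\gamma$ never vanishes on that interval and the intermediate value theorem applied on $[s_{0},1]$ produces a genuine point of $\mathcal{N}_{\rho}$. With that substitution your proof is complete.
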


\begin{proof}
We note that our nonlinearity $f(t) = \lambda t^{q} + t^{2_{\alpha}^{*}-1}$, $t>0$, is a $C^1$ function and $\frac{f(t)}{t}$ is a strictly increasing function. Let $u\in \mathcal{N}_{\rho}$, then we can show that the function $h(t) = I_{\rho}(tu)$, $t\neq 0$ has a unique maximum point $t_u$ such that 
\begin{equation}\label{G05}
I_{\rho}(t_uu) = \max_{t\geq 0}I_{\rho}(tu).
\end{equation}
Furthermore, we can show that $t_u =1$. Now choose $t_0\in \mathbb{R}$ and $\tilde{u} = t_0 u$ such that $I_{\rho}(\tilde{u}) <0$. Then $\gamma (t) = t \tilde{u} \in \Gamma_{\rho}$ then $I_\rho (u) \geq C_\rho$, that is,
\begin{equation}\label{G06}
C^* \geq C_{\rho}.
\end{equation}
On the other hand, let $\{u_k\}$ be the $(PS)_{C_\rho}$ sequence satisfying (\ref{G02}) and (\ref{G03}). Since $\{u_k\}$ is bounded, then $I'_{\rho}(u_k)u_k \to 0$ as $k\to \infty$, moreover, from (\ref{G05}) for each $k$, there is a unique $t_k\in \mathbb{R}^+$ such that 
\begin{equation}\label{G07}
I'_{\rho}(t_ku_k)t_ku_k = 0,\;\;\forall k.
\end{equation}     
Hence $t_ku_k \in \mathcal{N}_\rho$.

Now we note that by (\ref{G07}), we have
\begin{equation}\label{G08}
\|u_k\|_{\rho}^{2} = \lambda t_{k}^{q-1}\|u_{k+}\|_{L^{q+1}}^{q+1} + t_{k}^{2_{\alpha}^{*}-2}\|u_{k+}\|_{L^{2_{\alpha}^{*}}}^{2_{\alpha}^{*}}, \;\;\forall k.
\end{equation}
So $t_k$ does not converge to $0$; otherwise, since $\{u_k\}$ is bounded in $H_{\rho}^{\alpha}(\mathbb{R}^n)$, using (\ref{G08}) we obtain
$$
\|u_k\|_{\rho} \to 0,\;\;\mbox{as}\;\;k\to \infty,
$$ 
which is imposible since $C_\rho >0$. Also, $t_k$ does not go to infinity. In fact, by (\ref{G08}) we get
\begin{equation}\label{G09}
\frac{\|u_k\|_{\rho}^2}{t_{k}^{2_{\alpha}^{*} - 2}} = \lambda t_{k}^{q-1-2_{\alpha}^{*}}\|u_{k+}\|_{L^{q+1}}^{q+1} + \|u_{k+}\|_{L^{2_{\alpha}^{*}}}^{2_{\alpha}^{*}}, \;\;\forall k.
\end{equation}
So, assuming that $t_k \to \infty$, as $k \to \infty$, by (\ref{G09}) we get that 
$$
u_k \to 0 \;\;\mbox{in}\;\;L^{2_{\alpha}^{*}}(\mathbb{R}^n),\;\;\mbox{as}\;\;k \to \infty.
$$ 
Then using interpolation inequality it follows that
\begin{equation}\label{G10}
\|u_{k+}\|_{L^{q+1}}^{q+1}\to 0\;\;\mbox{as}\;\;k\to \infty.
\end{equation}
Moreover, since $I'_{\rho}(u_k)u_k \to 0$, as $k \to \infty$, we obtain,
\begin{equation}\label{G11}
\|u_k\|_{\rho}^{2} = \lambda \|u_{k+}\|_{L^{q+1}}^{q+1} + \|u_k\|_{L^{2_{\alpha}^{*}}}^{2_{\alpha}^{*}} + o(1),\;\;\mbox{as}\;\;k \to \infty.
\end{equation} 
So, by (\ref{G10}) and (\ref{G11}), we conclude that $\|u_k\|_{\rho}^{2} \to 0$, as $k \to \infty$, contradicting $C_{\rho}>0$. Hence, the sequence $\{t_k\}$ is bounded and there exists $t_0\in (0, \infty)$ such that (up to subsequence) $t_k \to t_0$ as $k \to \infty$. 

Now, from (\ref{G08}) and (\ref{G11}) we obtain 
\begin{equation}\label{G12}
o(1) = \lambda (t_{k}^{q-1} - 1)\|u_k\|_{L^{q+1}}^{q+1} + (t_{k}^{2_{\alpha}^{*} - 2} 1)\|u_k\|_{L^{2_{\alpha}^{*}}}^{2_{\alpha}^{*}},\;\;\mbox{as}\;\;k \to \infty.
\end{equation}
From where $t_0 = 1$, namely
\begin{equation}\label{G13}
t_k \to 1,\;\;\mbox{as}\;\;k \to \infty.
\end{equation} 

Therefore, by (\ref{G13}) and recalling that $t_k u_k \in \mathcal{N}_\rho$, we get
$$
\begin{aligned}
C^{*} &\leq I_\rho (t_k u_k)\\
&= t_{k}^{2}\left[ I_\rho (u_k) + \frac{\lambda}{q+1} (1-t_{k}^{q-1})\|u_{k+}\|_{L^{q+1}}^{q+1} + \frac{1}{2_{\alpha}^{*}} (1 - t_{k}^{2_{\alpha}^{*} - 2})\|u_{k+}\|_{L^{2_{\alpha}^{*}}}^{2_{\alpha}^{*}} \right]\\
& = t_{k}^2I_{\rho}(u_k) + o(1)\\
& = (t_k^2 - 1)I_{\rho}(u_k) + I_{\rho (u_k)} + o(1).
\end{aligned}
$$
Passing to the limit we obtain $C^* \leq C_\rho$.

On the other hand, By the previous comments, for any $u\in H_{\rho}^{\alpha}(\mathbb{R}^{n})\setminus \{0\}$ there is a unique $t_{u} = t(u) >0$ such that
$
t_{u}u \in \mathcal{N}_{\rho},
$
then
$$
C^{*} \leq \inf_{u\in H_{\rho}^{\alpha}(\mathbb{R}^{n})\setminus \{0\}} \max_{t\geq 0} I_{\rho}(tu).
$$
Moreover, for any $u\in \mathcal{N}_{\rho}$, we have
\begin{eqnarray*}
I_{\rho}(u) & = & \max_{t\geq 0} I_{\rho}(tu) \geq \inf_{u\in H_{\rho}^{\alpha}(\mathbb{R}^{n})\setminus \{0\}} \max_{t\geq 0} I_{\rho}(tu)
\end{eqnarray*}
so
$$
C^{*}=\inf_{\mathcal{N}_{\rho}} I_{\rho}(u) \geq \inf_{u\in H_{\rho}^{\alpha}(\mathbb{R}^{n})\setminus \{0\}} \max_{t\geq 0} I_{\rho}(tu).
$$
\end{proof}

\begin{Remark}\label{Gnta1}
Suppose that $(\rho_1)$ holds and without loss of generality take $\epsilon = 1$, then  
$$
C_{\rho} < C.
$$ 
In fact, let $u$ be a critical point of $I$ with critical value $C$ and for any $y\in \mathbb{R}^n$, define $u_y(x) = u(x+y)$. Then  for any $t>0$ we have
$$
C = I(u_y) \geq I(tu_y) > I_{\rho}(tu_y).
$$
By Lemma \ref{Glm2}, we can take $t^*>0$ such that $t^*u_y \in \mathcal{N}_\rho$ and 
$$
I_\rho (t^* u_y) = \sup_{t>0}I_\rho (tu_y),
$$
consequently $C> I_\rho (t^*u_y) \geq C_\rho$. In the same way we can show that
$$
C_\rho < C_{\rho_\infty} < C.
$$
\end{Remark}
\begin{Remark}\label{Gnta2}
According to (\ref{L02}) and by Remark \ref{Gnta1} we have
$$
0<C_\rho< \frac{\alpha}{n}S^{n/2\alpha}.
$$
\end{Remark}

\begin{Lem}\label{Glm4}
Suppose that $C_\rho < C$. Then there are $\nu, R>0$ such that 
$$
\int_{B(0, R)} u_{k}^{2}(x)dx \geq \nu,\;\;\mbox{for all}\;\;k\in \mathbb{N}. 
$$
\end{Lem}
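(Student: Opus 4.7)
The plan is to argue by contradiction, reducing to the strict inequality $C_\rho<C$ guaranteed by hypothesis. If no such $R,\nu$ exist, a diagonal extraction yields a subsequence, still denoted $\{u_k\}$, with $\int_{B(0,R)}u_k^{2}\,dx\to 0$ for every fixed $R>0$.

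I would first rule out the \emph{vanishing} alternative. If in addition $\sup_{y\in\R^n}\int_{B(y,R)}u_k^{2}\,dx\to 0$ for some $R>0$, then Lemma~\ref{CClem} forces $u_k\to 0$ strongly in $L^{q+1}(\R^n)$. Combining this with $I'_\rho(u_k)u_k\to 0$ gives $\|u_k\|_\rho^{2}-\|u_{k+}\|_{L^{2_\alpha^{*}}}^{2_\alpha^{*}}\to 0$, and then $I_\rho(u_k)\to C_\rho$ yields $\|u_{k+}\|_{L^{2_\alpha^{*}}}^{2_\alpha^{*}}\to (n/\alpha)C_\rho$ and $\|u_k\|_\rho^{2}\to (n/\alpha)C_\rho$. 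Plugging into the Sobolev inequality (\ref{P01}) (via Proposition~\ref{PFCTprop} and Remark~\ref{FSnta1x}, which compare $\|\cdot\|_\rho$ with the fractional seminorm appearing in (\ref{P01})) produces $C_\rho\geq (\alpha/n)S^{n/2\alpha}$, contradicting Remark~\ref{Gnta2}.

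Hence there exist $R>0$, $\nu_0>0$ and $y_k\in\R^n$ with $\int_{B(y_k,R)}u_k^{2}\,dx\geq\nu_0$. The contradiction hypothesis (non-concentration at the origin) then forces $|y_k|\to\infty$. Setting $v_k(x):=u_k(x+y_k)$, Remark~\ref{FSnta1x} provides a uniform bound in $H^\alpha(\R^n)$, so, up to a subsequence, $v_k\rightharpoonup v$ in $H^\alpha(\R^n)$ and $v_k\to v$ in $L^2_{\mathrm{loc}}(\R^n)$; the lower bound $\int_{B(0,R)}v_k^{2}\,dx\geq\nu_0$ ensures $v\not\equiv 0$.

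The final step is to identify the limit equation: since $\rho(x+y_k)\to\rho_\infty$ by $(\rho_1)$ (with $(\rho_3)$ taking care of the case $\rho_\infty=\infty$), passing to the limit in $I'_\rho(u_k)(\varphi(\cdot-y_k))\to 0$ for every $\varphi\in C_0^\infty(\R^n)$ shows that $v$ is a nontrivial nonnegative weak solution of (\ref{L01}); hence $I(v)\geq C$. A Brezis--Lieb decomposition, together with the pointwise convergence of the regional kernel to the full Gagliardo kernel, gives
$$C_\rho=\lim_{k\to\infty}I_\rho(u_k)\geq I(v)\geq C,$$
contradicting $C_\rho<C$. The main obstacle is precisely this last chain: one must justify passing from the regional bilinear form $\langle u_k,\cdot\rangle_\rho$ to the full fractional form as $|y_k|\to\infty$, since the scope $B(0,\rho(x+y_k))$ depends on $x$ and exhausts $\R^n$ only pointwise. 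A careful Brezis--Lieb splitting for the critical term, combined with monotone/dominated convergence on the Gagliardo kernel restricted to the translated scopes, is needed to obtain $\liminf I_\rho(u_k)\geq I(v)$. The vanishing step and the extraction of $v$ are fairly standard once this passage is secured.
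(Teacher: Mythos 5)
Your overall strategy (contradiction, rule out vanishing via the Sobolev constant, translate to a nontrivial weak limit at infinity, compare energies) is a genuinely different route from the paper's, and as written it has concrete gaps. The most serious one is the vanishing step: the constant $S$ in (\ref{P01}) is defined with the \emph{full} Gagliardo seminorm over $\mathbb{R}^n\times\mathbb{R}^n$, whereas $\|u_k\|_\rho^2$ contains only the regional seminorm plus the $L^2$ norm. The tail $\int_{\mathbb{R}^n}\int_{|z|>\rho(x)}|u_k(x+z)-u_k(x)|^2|z|^{-n-2\alpha}\,dz\,dx$ is bounded by $\tfrac{2|S^{n-1}|}{\alpha\rho_0^{2\alpha}}\|u_k\|_{L^2}^2$, a constant which is \emph{not} $1$; so from $\|u_k\|_\rho^2\to(n/\alpha)C_\rho$ and $\|u_{k+}\|_{L^{2_\alpha^*}}^{2_\alpha^*}\to(n/\alpha)C_\rho$ you only get $C_\rho\geq\tfrac{\alpha}{n}(S/C_0)^{n/2\alpha}$ with $C_0=C_0(n,\alpha,\rho_0)>1$, which does not contradict Remark \ref{Gnta2}. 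The analogous computation in Lemma \ref{CClm03} closes only because there the scope has been rescaled to $\rho(\epsilon_m x)/\epsilon_m\geq\rho_0/\epsilon_m\to\infty$, so the tail constant vanishes; at $\epsilon=1$ it does not. Two further issues: (i) when $\rho_\infty<\infty$, the translated scopes $\rho(\cdot+y_k)$ converge to the constant $\rho_\infty$, so your limit $v$ solves the \emph{regional} equation with scope $\rho_\infty$, not (\ref{L01}); the contradiction must then be with $C_\rho<C_{\rho_\infty}$ from Remark \ref{Gnta1}, a case you do not treat. (ii) The chain $C_\rho=\lim I_\rho(u_k)\geq I(v)\geq C$ is exactly the step you acknowledge as "needed": to invoke $I(v)\geq C$ you must first show $I'(v)v=0$ (so that the Nehari characterization applies), which requires passing to the limit in the moving regional bilinear forms — this is the heart of the matter and is not supplied. (The inequality $\lim I_\rho(u_k)\geq I(v)$ itself is more cleanly obtained from $I_\rho-\tfrac12 I_\rho'(\cdot)(\cdot)$, translation invariance of the Lebesgue norms and Fatou, avoiding Brezis--Lieb; but it is useless without $I'(v)v=0$.)

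For comparison, the paper's proof avoids all of this machinery. It never rules out vanishing and never extracts a translated weak limit: it simply writes $I_\rho(tu_k)=I(tu_k)-\tfrac{t^2}{2}\int_{\mathbb{R}^n}\int_{\mathbb{R}^n\setminus B(0,\rho(x))}(\cdots)$ (or the analogous identity (\ref{SEPeq17n}) with $I_{\rho_\infty}$ when $\rho_\infty<\infty$) and bounds the correction term by a constant times $\|u_k\|_{L^2(B(0,R))}^2$ plus an arbitrarily small multiple of $\|u_k\|_{L^2(\mathbb{R}^n)}^2$, using only the elementary tail estimate for the kernel $|z|^{-n-2\alpha}$. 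If $\int_{B(0,R)}u_k^2\to 0$, evaluating at the maximizing parameters $\tau_k$ for $I$ gives $C_\rho\geq C$ (resp. $C_\rho\geq C_{\rho_\infty}$), contradicting the hypothesis directly. You should either adopt this direct comparison or, if you keep your route, repair the Sobolev-constant step, add the $\rho_\infty<\infty$ case, and actually prove the convergence of the regional forms under translation.
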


\begin{proof}
By Lemma \ref{Glm3}, for each $k\in \mathcal{N}$, there exist $t_k \subset \mathbb{R}$ such that $t_k \to 1$ and 
$$
I_\rho (t_k u_k) = \max_{t\geq 0} I_{\rho} (tu_k).
$$ 
Furthermore
$$
I_{\rho}(u_k) = I_{\rho}(t_ku_k) + o(1) \geq I_{\rho}(tu_k)+o(1),\;\;\mbox{for all}\;\;t>0.
$$

Now we consider two cases, namely, when $\rho_\infty = \infty$ and $\rho_\infty < \infty$. In the first case, by $(\rho_1)$, for every $\epsilon >0$ there exist $R>0$ such that 
$$
\mathbb{R}^n \setminus B(0, \rho (x)) \subset \mathbb{R}^n \setminus B(0, \frac{1}{\epsilon})\;\;\mbox{whenever}\;\;|x|>R.
$$
Then
\begin{equation}\label{G14}
\begin{aligned}
I_\rho (tu_k) &= I(tu_k) - \frac{t^2}{2}\int_{\mathbb{R}^n}\int_{\mathbb{R}^n\setminus B(0, \rho (x))}\frac{|u_k(x+z) - u_k(x)|^2}{|z|^{n+2\alpha}}dz dx\\
&\geq I(tu_k) -  \frac{|S^{n-1}|t^2}{\alpha \rho_0^{2\alpha}}\|u_k\|_{L^2(B(0, R))}^{2} + \frac{|S^{n-1}|t^2\epsilon^{2\alpha}}{\alpha}\|u_k\|_{L^2(\mathbb{R}^n)}^{2},\;\;\mbox{for all}\;\;t>0.
\end{aligned}
\end{equation}
 If $\{\tau_k\}$ is the bounded real sequence seven by
 $$
 I(\tau_k u_k) = \max_{t\geq 0}I(tu_k),
 $$
 we obtain
 \begin{equation}\label{G15}
I_\rho (t_ku_k) \geq I(\tau_{k}u_k) -  \frac{|S^{n-1}|\tau_{k}^2}{\alpha \rho_0^{2\alpha}}\|u_k\|_{L^2(B(0, R))}^{2} + \frac{|S^{n-1}|\tau_{k}^2\epsilon^{2\alpha}}{\alpha}\|u_k\|_{L^2(\mathbb{R}^n)}^{2}.
 \end{equation}
 If 
 \begin{equation}\label{G16} 
 \int_{B(0, R)} u_k^2(x)dx \to 0\;\;\mbox{as}\;\;k\to \infty,
 \end{equation}
 from (\ref{G14}), (\ref{G15})  and (\ref{G16}) yield
 $$
 C_\rho \geq C,
 $$
 which is a contradiction with Remark \ref{Gnta1}. 
 
Now we analyze the case  $\rho_{\infty} < +\infty$. In this case we compare the functionals $I_{\rho}$ and 
$I_{\rho_{\infty}}$ writing
\begin{equation}\label{SEPeq17n}
I_{\rho}(u) = I_{\rho_{\infty}}(u) - \frac{1}{2}\int_{\mathbb{R}^{n}}\int_{B(0, \rho_{\infty})\setminus B(0,\rho (x))} \frac{|u(x+z) - u(x)|^{2}}{|z|^{n+2\alpha}}dzdx.
\end{equation}
By hypothesis 
($\rho_{1}$), for any $\epsilon >0$ there is $R>0$ such that
$$
0< \rho_{\infty} - \rho(x)  < \epsilon,\;\mbox{whenever}\;|x|>R. 
$$ 
Then, we obtain
\begin{eqnarray*}
I_{\rho}(t u_{k})  \ge  I_{\rho_{\infty}}(t u_{k}) - C(\epsilon)t^2\|u_k\|_{L^{2}}^{2} - C t^2\|u_k\|_{L^{2}(B(0,R))}^{2}.
\end{eqnarray*}
where 
\begin{eqnarray*}
C(\epsilon)=\frac{2|S^{n-1}|}{\alpha} \left( \frac{1}{(\rho_{\infty} - \epsilon)^{2\alpha}}  - \frac{1}{\rho_{\infty}^{2\alpha}} \right)
\,\,\,\mbox{and}\,\,\,
C= \frac{2|S^{n-1}|}{\alpha}\left( \frac{1}{\rho_{0}^{2\alpha}}  - \frac{1}{\rho_{\infty}^{2\alpha}}\right).
\end{eqnarray*}
Proceeding as before, by (\ref{G16}) we get $C_\rho \geq C_{\rho_\infty}$,  which is a contradiction with Remark \ref{Gnta1}. 
\end{proof}

The next result shows the existence of positive solution to (\ref{I01}) with $\epsilon = 1$.

\begin{Thm}\label{Gtm1}
Suppose that $\lambda>0$, $q>1$ and $(\rho_1)$ hold. Then, problem (\ref{I01}) with $\epsilon=1$ possesses a positive ground state solution. 
\end{Thm}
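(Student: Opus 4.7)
The plan is to produce a positive ground state by applying the mountain pass theorem in $H^{\alpha}_\rho(\mathbb{R}^n)$, recovering compactness at the level $C_\rho$ through the two strict inequalities $C_\rho<C$ (Remark \ref{Gnta1}) and $C_\rho<\frac{\alpha}{n}S^{n/2\alpha}$ (Remark \ref{Gnta2}). First, by Lemma \ref{Glm1} and the mountain pass theorem without the Palais--Smale condition, I obtain a sequence $\{u_k\}\subset H^{\alpha}_\rho(\mathbb{R}^n)$ satisfying $I_\rho(u_k)\to C_\rho$ and $I'_\rho(u_k)\to 0$. Computing $I_\rho(u_k)-\frac{1}{2}I'_\rho(u_k)u_k$ as in Lemma \ref{Glm2} bounds $\|u_{k+}\|_{L^{q+1}}$ and $\|u_{k+}\|_{L^{2_\alpha^{*}}}$, and then the identity $\|u_k\|_\rho^2=\lambda\|u_{k+}\|_{L^{q+1}}^{q+1}+\|u_{k+}\|_{L^{2_\alpha^{*}}}^{2_\alpha^{*}}+o(1)$ yields boundedness of $\{u_k\}$ in $H^{\alpha}_\rho(\mathbb{R}^n)$. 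Thus, up to a subsequence, $u_k\rightharpoonup u$ weakly in $H^{\alpha}_\rho(\mathbb{R}^n)$ and pointwise a.e.

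Next, I would show $u\neq 0$. Since $C_\rho<C$ by Remark \ref{Gnta1}, Lemma \ref{Glm4} provides $\nu,R>0$ such that $\int_{B(0,R)}u_k^2\,dx\geq\nu$ for all $k$. Combined with the local compact embedding $H^{\alpha}_\rho(\mathbb{R}^n)\hookrightarrow L^2_{\mathrm{loc}}(\mathbb{R}^n)$ from Remark \ref{FSnta1}, this forces $\int_{B(0,R)}u^2\,dx\geq\nu$, so $u\not\equiv 0$. A standard weak-continuity argument, using pointwise a.e.\ convergence together with the boundedness of $u_{k+}^{q}$ and $u_{k+}^{2_\alpha^{*}-1}$ in the appropriate dual spaces, lets me pass to the limit in $I'_\rho(u_k)\varphi\to 0$ for $\varphi\in C_0^\infty(\mathbb{R}^n)$, giving $I'_\rho(u)=0$, i.e.\ $u\in\mathcal{N}_\rho$.

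The main obstacle is showing that $u$ realizes the ground state energy. Because of the critical growth the Sobolev embedding is not compact at level $2_\alpha^{*}$, so I must rule out energy loss by a concentration phenomenon. Writing $v_k=u_k-u$ and using the fractional Brezis--Lieb lemma together with $I'_\rho(u_k)u_k\to 0$ and $I'_\rho(u)u=0$, I get
\begin{equation*}
\|v_k\|_\rho^{2}-\|v_{k+}\|_{L^{2_\alpha^{*}}}^{2_\alpha^{*}}=o(1),\quad I_\rho(v_k)=C_\rho-I_\rho(u)+o(1).
\end{equation*}
If $\|v_k\|_\rho\not\to 0$, passing $\|v_k\|_\rho^{2}=\|v_{k+}\|_{L^{2_\alpha^{*}}}^{2_\alpha^{*}}+o(1)$ through Proposition \ref{PFCTprop} and the definition \eqref{P01} of $S$ forces $\liminf\|v_k\|_\rho^{2}\geq S^{n/2\alpha}$, whence $I_\rho(v_k)\geq\frac{\alpha}{n}S^{n/2\alpha}+o(1)$. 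Since $I_\rho(u)\geq C^{*}=C_\rho$ by Lemma \ref{Glm3}, this contradicts $C_\rho<\frac{\alpha}{n}S^{n/2\alpha}$ from Remark \ref{Gnta2}. Hence $u_k\to u$ strongly in $H^{\alpha}_\rho(\mathbb{R}^n)$ and $I_\rho(u)=C_\rho=C^{*}$.

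Finally, positivity: testing $I'_\rho(u)=0$ against $u_-=\max\{-u,0\}$ and using the elementary pointwise inequality $(a-b)(a_--b_-)\geq |a_--b_-|^2$ yields $\|u_-\|_\rho^{2}\leq 0$, so $u_-\equiv 0$, i.e.\ $u\geq 0$. A strong maximum principle for $(-\Delta)_\rho^{\alpha}+\mathrm{Id}$ (or, equivalently, the representation $u=K\ast(\lambda u^q+u^{2_\alpha^{*}-1})$ for a strictly positive kernel on bounded sets) then gives $u>0$ in $\mathbb{R}^n$. This $u$ is the desired positive ground state.
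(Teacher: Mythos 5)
Your skeleton (mountain pass sequence, boundedness, nontriviality via Lemma \ref{Glm4} together with the local compact embedding, $I'_{\rho}(u)=0$, sign via testing with $u_-$) coincides with the paper's proof. The divergence, and the gap, is in the step identifying $I_{\rho}(u)$ with $C_{\rho}$. Two problems there. First, the claimed identity $\|v_k\|_{\rho}^{2}-\|v_{k+}\|_{L^{2_{\alpha}^{*}}}^{2_{\alpha}^{*}}=o(1)$ silently discards the subcritical term: from $I'_{\rho}(u_k)u_k\to 0$, $I'_{\rho}(u)u=0$ and Brezis--Lieb one only gets $\|v_k\|_{\rho}^{2}=\lambda\|v_{k+}\|_{L^{q+1}}^{q+1}+\|v_{k+}\|_{L^{2_{\alpha}^{*}}}^{2_{\alpha}^{*}}+o(1)$, and on all of $\mathbb{R}^{n}$ the embedding into $L^{q+1}$ is \emph{not} compact, so $\|v_{k+}\|_{L^{q+1}}\to 0$ is not free (the remainder can be a bump escaping to infinity). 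Second, the deduction $\liminf\|v_k\|_{\rho}^{2}\geq S^{n/2\alpha}$ uses the quotient (\ref{P01}), which involves the full Gagliardo seminorm, whereas $\|\cdot\|_{\rho}$ contains only the truncated one; Proposition \ref{PFCTprop} gives $\|v\|_{\alpha}\leq C\|v\|_{\rho}$ with a constant $C$ that need not equal $1$, so the threshold you actually obtain is of the form $\frac{\alpha}{n}(S/C^{2})^{n/2\alpha}$, and comparing that with $C_{\rho}<\frac{\alpha}{n}S^{n/2\alpha}$ from Remark \ref{Gnta2} no longer produces a contradiction.

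Both defects are repairable, and the repair is exactly the paper's (shorter) route: you do not need the concentration dichotomy at all. From your own correct identities, $I_{\rho}(v_k)=\lambda\bigl(\frac{1}{2}-\frac{1}{q+1}\bigr)\|v_{k+}\|_{L^{q+1}}^{q+1}+\frac{\alpha}{n}\|v_{k+}\|_{L^{2_{\alpha}^{*}}}^{2_{\alpha}^{*}}+o(1)\geq o(1)$, while $I_{\rho}(v_k)=C_{\rho}-I_{\rho}(u)+o(1)\leq o(1)$ since $u\in\mathcal{N}_{\rho}$ gives $I_{\rho}(u)\geq C^{*}=C_{\rho}$ by Lemma \ref{Glm3}; hence $I_{\rho}(u)=C_{\rho}$. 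The paper phrases this without Brezis--Lieb: it writes $C_{\rho}=I_{\rho}(u_k)-\frac{1}{2}I'_{\rho}(u_k)u_k+o(1)$ as a sum of nonnegative integrals, applies Fatou's lemma to get $C_{\rho}\geq I_{\rho}(u)-\frac{1}{2}I'_{\rho}(u)u=I_{\rho}(u)$, and obtains the reverse inequality from the Nehari characterization $C_{\rho}\leq\max_{t\geq 0}I_{\rho}(tu)=I_{\rho}(u)$. Neither route yields strong convergence $u_k\to u$, but the theorem does not require it. Two smaller points: your pointwise inequality has the wrong sign --- with $a_-=\max\{-a,0\}$ one has $(a-b)(a_--b_-)\leq-|a_--b_-|^{2}$ (test $a=1$, $b=-1$), which is what gives $0=\langle u,u_-\rangle_{\rho}\leq-\|u_-\|_{\rho}^{2}$; and the strict positivity $u>0$ requires a maximum principle for the regional operator that neither you nor the paper actually establishes ($u\geq 0$, $u\not\equiv 0$ is all that is proved).
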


\begin{proof}
Using (\ref{G02}), Lemma \ref{Glm4} and the Sobolev embedding we have
$$
\int_{B(0, R)} u^2(x)dx \geq \nu >0,
$$
which proves that $u\neq 0$. Furthermore, by standard arguments we have
$$
I'_\rho (u)\varphi = 0\;\;\mbox{for all}\;\;\varphi \in H_{\rho}^{\alpha}(\mathbb{R}^n),
$$ 
so choosing $\varphi = u_{-}(x) \max \{-u(x), 0\}$ and noting that for $x,z\in\R^n$ we have
\begin{eqnarray*}
(u(x+z)-u(x))(u_-(x+z)-u_-(x))&=&-u_+(x+z)u_-(x)-u_+(x)u_-(x+z)\\
&&-(u_-(x+z)-u_-(x))^2\le 0, 
\end{eqnarray*}
we can conclude that $\|u_{-}\|_{\rho} = 0$, thus $u(x)\geq 0$ a.e.$x\in \mathbb{R}^n$.

Moreover, from Lemma \ref{Glm3}
\begin{equation}\label{G18}
C_\rho \leq \max_{t\geq 0} I_\rho (tu) = I_{\rho}(u).
\end{equation}
On the other hand, we have
$$
\begin{aligned}
C_\rho &= I_{\rho}(u_k) - \frac{1}{2}I'_\rho (u_k)u_k + o_k(1)\\
&= \lambda \left( \frac{1}{2} - \frac{1}{q+1} \right)\int_{\mathbb{R}^n}u_{k+}^{q+1}dx + \frac{1}{n}\int_{\mathbb{R}^n}u_{k+}^{2_{\alpha}^{*}}dx + o_k(1).
\end{aligned}
$$ 
Applying Fatou's Lemma to last inequality, we obtain
\begin{equation}\label{G19}
C_\rho \geq \lambda \left(\frac{1}{2} - \frac{1}{q+1}  \right)\int_{\mathbb{R}^n}u^{q+1}dx +\frac{1}{n}\int_{\mathbb{R}^n}u^{2_{\alpha}^{*}}dx = I_\rho (u) - \frac{1}{2}I'_\rho (u)u = I_\rho (u)
\end{equation}    
From (\ref{G18}) and (\ref{G19}) we obtain
$$
I_\rho (u) = C_\rho,
$$
and hence $u$ is a least energy solution and the proof is finished.
\end{proof}

\noindent
{\bf Proof of Theorem \ref{}.} In what follows, we denote by $\{u_k\} \subset H_{\rho_\epsilon}^{\alpha}(\mathbb{R}^N)$ a sequence satisfying
$$
I_{\rho_{\epsilon}}(u_k) \to C_{\rho_{\epsilon}} \quad \mbox{and} \quad I'_{\rho_{\epsilon}}(u_k) \to 0.
$$	
If $u_k \rightharpoonup 0$ in $H_{\rho_\epsilon}^{\alpha}(\mathbb{R}^N)$, then 
\begin{equation}\label{lim0}
u_k \to 0\;\;\mbox{ in}\;\;L_{loc}^{p}(\mathbb{R}^n)\;\;\mbox{for}\;\; p\in [2, 2_{\alpha}^{*}).
\end{equation} 
By $(\rho_1)$, we obtain 
$$
\begin{aligned}
I_{\rho_\epsilon} (tu_k) & = I_{\frac{\rho_\infty}{\epsilon}} (tu_k) - \frac{t^2}{2}\int_{\mathbb{R}^n}\int_{B(0, \frac{\rho_\infty}{\epsilon})\setminus B(0, \frac{\rho (\epsilon x)}{\epsilon})} \frac{|u_k(x+z) - u_k(x)|^2}{|z|^{n+2\alpha}}dz dx
\end{aligned}
$$
where
$$
\begin{aligned}
I_{\frac{\rho_\infty}{\epsilon}} (u) &= \frac{1}{2}\left( \int_{\mathbb{R}^n}\int_{B(0, \frac{\rho_\infty}{\epsilon})} \frac{|u(x+z) - u(x)|^2}{|z|^{n+2\alpha}}dxdx + \int_{\mathbb{R}^n}u^2dx \right) \\&- \frac{\lambda}{q+1}\int_{\mathbb{R}^n} u_+^{q+1}dx - \frac{1}{2_{\alpha}^{*}}\int_{\mathbb{R}^n} u_{+}^{2_{\alpha}^{*}}dx.
\end{aligned}
$$
Now we know that there exists a bounded sequence $\{\tau_k\}$ such that  
$$
I_{\frac{\rho_\infty}{\epsilon}}(\tau_k u_k) \geq C(\frac{\rho_\infty}{\epsilon}), 
$$
where
$$
C(\frac{\rho_\infty}{\epsilon}) = \inf_{v\in H^{\alpha}(\mathbb{R})\setminus \{0\}} \sup_{t\geq 0}I_{\frac{\rho_\infty}{\epsilon}}(tv)
$$
Thus, 
$$
\begin{aligned}
C_{\rho_{\epsilon}} &\geq C(\frac{\rho_\infty}{\epsilon})- \frac{|S^{n-1}|}{\alpha}\left(\frac{1}{\rho_0^{2\alpha}} - \frac{1}{\rho_{\infty}^{2\alpha}} \right)\tau_k^2  \epsilon^{2\alpha}\|u_k\|_{L^2(B(0, \frac{R}{\epsilon}))}^2\\
& - \frac{|S^{n-1}|}{\alpha}\left( \frac{1}{(\rho_\infty - \delta)^{2\alpha}} - \frac{1}{\rho_{\infty}^{2\alpha}} \right)\tau_k^{2}\epsilon^{2\alpha}\|u_k\|_{L^2(\mathbb{R}^n)}^{2}
\end{aligned}
$$
Taking the limit as $k\to \infty$, and after $\delta \to 0$, we find
\begin{equation}\label{eq21}
C_{\rho_\epsilon} \geq C(\frac{\rho_\infty}{\epsilon})
\end{equation}
A standard argument shows that
$$
\liminf_{\epsilon \to 0}C(\frac{\rho_\infty}{\epsilon}) \geq C.
$$ 
Therefore, if there is $\epsilon_k \to 0$ such that the $(PS)_{C_{\rho_{\epsilon_k}}}$ sequence has weak limit equal to zero, we must have
$$
C_{\rho_{\epsilon_k}} \geq C(\frac{\rho_\infty}{\epsilon_k}), \quad \forall k \in \mathbb{N},
$$
leading to 
$$
\liminf_{n \to +\infty} C_{\rho_{\epsilon_n}} \geq C,
$$
which contradicts Remark \ref{Gnta1}.  This proves that the weak limit is non trivial for $\epsilon >0$ small enough and standard arguments show that its energy is equal to $C_{\rho_{\epsilon}}$, showing the desired result. $\Box$

\section{Concentration Behaviour}

In this section we make a preliminary analysis of the asymptotic behavior  of the functional associated to equation \equ{I01} when $\epsilon\to 0$.  As is point up in \cite{PFCT1}, the scope function $\rho$, that describes the size of the ball of the influential region of the non-local operator, plays a key role in deciding the concentration point of ground states of the equation. Even though, at a first sight, the minimum point of $\rho$ seems to be the concentration point, there is a non-local effect that needs to be taken in account. We define the concentration function
\begin{equation}\label{C01}
\mathcal{H}(x) = -\frac{|S^{n-1}|}{2\alpha}\left( \frac{1}{\rho(x)^{2\alpha}} - \frac{1}{\rho_{\infty}^{2\alpha}}\right) + \frac{1}{2}\int_{\mathcal{C}^+(x)} \frac{dy}{|y|^{n+2\alpha}} - \frac{1}{2}\int_{\mathcal{C}^-} \frac{dy}{|y|^{n+2\alpha}},
\end{equation}
where the sets $\mathcal{C}^{+}({x})$ and $\mathcal{C}^{-}({x})$ are defined as follows
$$
\mathcal{C}^{-}({x}) = \{ y\in\R^n : \;\; \rho(x+ y ) < |y| < \rho ({x})\}
$$ 
and  
$$
\mathcal{C}^{+}({x}) = \{y\in\R^n  :\;\; \rho({x}) < |y| < \rho (x + y )\}.
$$
We start with  some basic properties of the function $\mathcal{H}$.  
\begin{Lem}\label{CClm01}
\cite{PFCT1} Assuming $\rho$ satisfies $(\rho_1)-(\rho_3)$, the function $\mathcal{H}$ is continuous and 
\begin{equation}\label{C02}
\lim_{|x|\to\infty}\mathcal{H}(x)=0.
\end{equation}
Moreover, there exists $x_{0}\in \mathbb{R}^{n}$ such that
\begin{equation}\label{C03}
\inf_{x\in\R^n}\mathcal{H}(x)=\mathcal{H}(x_{0}) < 0.
\end{equation}
\end{Lem}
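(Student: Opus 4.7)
The proof naturally splits into three parts: continuity of $\mathcal{H}$, the limit at infinity, and the existence of a strictly negative minimizer. A single observation drives all three steps: on $\mathcal{C}^{+}(x)$ one has $|y|>\rho(x)\ge \rho_{0}$, and on $\mathcal{C}^{-}(x)$ one has $|y|>\rho(x+y)\ge \rho_{0}$. Hence $|y|^{-n-2\alpha}\chi_{\mathcal{C}^{\pm}(x)}(y)$ is dominated, uniformly in $x$, by the $L^{1}(\mathbb{R}^{n})$ function $|y|^{-n-2\alpha}\chi_{\{|y|\ge \rho_{0}\}}(y)$, which makes the dominated convergence theorem available throughout.

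For continuity, the term $-\frac{|S^{n-1}|}{2\alpha}(\rho(x)^{-2\alpha}-\rho_{\infty}^{-2\alpha})$ is continuous by continuity of $\rho$. For the integral terms I would take $x_{k}\to x$ and check that $\chi_{\mathcal{C}^{\pm}(x_{k})}(y)\to\chi_{\mathcal{C}^{\pm}(x)}(y)$ for a.e.\ $y$. The topological boundary of $\mathcal{C}^{\pm}(x)$ is contained in $\{|y|=\rho(x)\}\cup\{|y|=\rho(x+y)\}$; the first piece is a sphere and the second is an $(n-1)$-dimensional $C^{1}$ hypersurface by $(\rho_{2})$ (applied with base point $x$), so both are Lebesgue-null. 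Off this null set the strict inequalities defining $\mathcal{C}^{\pm}(x)$ persist under small perturbations of $x$, which gives pointwise convergence, and dominated convergence yields continuity.

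For the decay at infinity, the first term tends to zero by $(\rho_{1})$ (interpreting $\rho_{\infty}^{-2\alpha}=0$ when $\rho_{\infty}=\infty$). For fixed $y$ with $|y|\ne\rho_{\infty}$, the convergence $\rho(x),\rho(x+y)\to\rho_{\infty}$ forces the strict two-sided inequalities defining $\mathcal{C}^{\pm}(x)$ eventually to fail, so $\chi_{\mathcal{C}^{\pm}(x)}(y)\to 0$; the exceptional set $\{|y|=\rho_{\infty}\}$ is a sphere (empty, if $\rho_{\infty}=\infty$) of measure zero. Dominated convergence then gives $\int_{\mathcal{C}^{\pm}(x)}|y|^{-n-2\alpha}dy\to 0$.

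The core of the argument is the existence of a negative minimum. The crucial estimate is the set inclusion $\mathcal{C}^{+}(x)\subset\{y:\rho(x)<|y|<\rho_{\infty}\}$, valid by $(\rho_{1})$ since every $y\in\mathcal{C}^{+}(x)$ satisfies $|y|<\rho(x+y)<\rho_{\infty}$. Consequently
\begin{equation*}
\int_{\mathcal{C}^{+}(x)}\frac{dy}{|y|^{n+2\alpha}}\le \frac{|S^{n-1}|}{2\alpha}\bigl(\rho(x)^{-2\alpha}-\rho_{\infty}^{-2\alpha}\bigr),
\end{equation*}
and dropping the nonnegative $\mathcal{C}^{-}(x)$ contribution (which appears with a minus sign in $\mathcal{H}$) yields
\begin{equation*}
\mathcal{H}(x)\le -\frac{|S^{n-1}|}{4\alpha}\bigl(\rho(x)^{-2\alpha}-\rho_{\infty}^{-2\alpha}\bigr)<0\qquad\text{for every }x\in\mathbb{R}^{n},
\end{equation*}
because $(\rho_{1})$ enforces $\rho(x)<\rho_{\infty}$ strictly. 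Combined with continuity and $\mathcal{H}(x)\to 0$ at infinity, this forces the infimum to be strictly negative and attained: fix any $x_{1}$, choose $R>|x_{1}|$ so large that $\mathcal{H}(x)>\mathcal{H}(x_{1})/2$ whenever $|x|\ge R$, and then by compactness the minimum of $\mathcal{H}$ on $\overline{B(0,R)}$ at some $x_{0}$ is the global minimum. The main obstacle I anticipate is the measure-theoretic treatment of the boundary $\{|y|=\rho(x+y)\}$ in the continuity step, which is precisely where hypothesis $(\rho_{2})$ enters.
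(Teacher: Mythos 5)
The paper does not actually prove this lemma --- it is imported verbatim from \cite{PFCT1} --- so there is no in-text proof to compare against; judged on its own, your argument is correct and complete, and it is essentially the standard Felmer--Torres argument. The three pillars all check out: the uniform integrable majorant $|y|^{-n-2\alpha}\chi_{\{|y|\ge\rho_0\}}$ is valid on both $\mathcal{C}^{\pm}(x)$ because $(\rho_1)$ forces $|y|\ge\rho_0$ there; $(\rho_2)$ is exactly what makes the exceptional set $\{|y|=\rho(x+y)\}$ Lebesgue-null so that the indicators converge a.e.\ and dominated convergence gives continuity and the decay \eqref{C02}; and the inclusion $\mathcal{C}^{+}(x)\subset\{\,\rho(x)<|y|<\rho_\infty\,\}$ yields $\mathcal{H}(x)\le -\tfrac{|S^{n-1}|}{4\alpha}\bigl(\rho(x)^{-2\alpha}-\rho_\infty^{-2\alpha}\bigr)<0$ pointwise, which combined with continuity and the vanishing at infinity forces the infimum to be negative and attained. (Your proof never invokes $(\rho_3)$, which is harmless here; that hypothesis is needed elsewhere, e.g.\ to keep $\mathcal{C}^{+}(x)$ bounded, not for this lemma.)
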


Along this section we will consider a sequence of functions $\{w_{m}\} \subset H^{\alpha}(\mathbb{R}^{n})$ such that $\|w_{m} - w\|_{L^{2}(\mathbb{R}^{2})} \to 0$, where $w\in H^{\alpha}(\mathbb{R}^{n})$.  We will also consider  sequences $\{z_m\}\subset \R^n$ and $\{\epsilon_m\}\subset \R$ and assume that $\epsilon_m\to 0$ as $m\to\infty$. We define
 $\overline{\rho}_{m}$ as
\begin{equation}\label{C04}
\overline{\rho}_{m}(x) = \frac{1}{\epsilon_m} \rho (\epsilon_m x + \epsilon_m z_m),
\end{equation}
and the functional $I_{\bar{\rho}_m}$ defined as 
\begin{equation}\label{C05}
\begin{aligned}
I_{\bar{\rho}_m}(u) =& \frac{1}{2}\left( \int_{\mathbb{R}^n}\int_{B(0, \bar{\rho}_{m}(x))}\frac{|u(x+z) - u(x)|^2}{|z|^{n+2\alpha}}dz dx + \int_{\mathbb{R}^n}u^2dx \right) \\
&- \frac{\lambda}{q+1}\int_{\mathbb{R}^n}u_{+}^{q+1}dx - \frac{1}{2_{\alpha}^{*}}\int_{\mathbb{R}^n}u_{+}^{2_{\alpha}^{*}}dx.
\end{aligned}
\end{equation}
We will be considering the functionals
$$
I_{\frac{\rho_{\infty}}{\epsilon_{m}}}, I_{\frac{\rho(\bar x)}{\epsilon_{m}}}\quad\mbox{and the functional} \;\;I\;\;\mbox{in}\;\;\mathbb{R}^n\;\; (\mbox{with} \;\;\rho \equiv \infty).
$$
As in \cite{PFCT1} we have the following key Theorem
\begin{Thm}\label{CCtm01}
Under  hypotheses $(\rho_{1})-(\rho_{3})$, we assume as above that 
$w_{m}, w  \in H^{\alpha}(\mathbb{R}^{n})$ are such that $\|w_{m} - w\|_{L^{2}(\mathbb{R}^{2})} \to 0$ and  $\epsilon_m\to 0$, as $m\to\infty$. Then we have:

i)
 If $\epsilon_m z_m\to \bar x$ then 
\begin{equation}\label{C06}
\lim_{m \to \infty} \frac{I_{\overline{\rho}_{m}}(w_{m}) - I_{\frac{\rho_{\infty}}{\epsilon_{m}}}(w_{m})}{\epsilon_{m}^{2\alpha}} = \|w\|_{L^{2}}^{2} \mathcal{H}(\overline{x})\qquad \mbox{and}
\end{equation}

ii) If $|\epsilon_{m}|z_{{m}} \to\infty$ then 
\begin{equation}\label{C07}
\lim_{m \to \infty} \frac{I_{\overline{\rho}_{m}}(w_{m}) - I_{\frac{\rho_{\infty}}{\epsilon_{m}}}(w_{m})}{\epsilon_{m}^{2\alpha}} = 0.
\end{equation}
\end{Thm}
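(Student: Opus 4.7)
The plan is to compute the difference of functionals explicitly and then perform the rescaling $y = \epsilon_m z$, which reveals $\epsilon_m^{2\alpha}$ as a natural prefactor. Since $\bar\rho_m(x)\le\rho_\infty/\epsilon_m$ by $(\rho_1)$, one has
$$\frac{I_{\bar\rho_m}(w_m)-I_{\rho_\infty/\epsilon_m}(w_m)}{\epsilon_m^{2\alpha}} = -\frac{1}{2}\int_{\mathbb{R}^n}\int_{\rho(\epsilon_m x+\epsilon_m z_m)\le|y|\le\rho_\infty}\frac{|w_m(x+y/\epsilon_m)-w_m(x)|^2}{|y|^{n+2\alpha}}\,dy\,dx.$$
Expanding the square as $w_m(x)^2 + w_m(x+y/\epsilon_m)^2 - 2w_m(x+y/\epsilon_m)w_m(x)$ splits the right-hand side into three pieces $J_1^m + J_2^m + J_3^m$, which I will analyze separately.

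In case (i), where $\epsilon_m z_m\to\bar x$, the inner $y$-integral in $J_1^m$ evaluates to $\frac{|S^{n-1}|}{2\alpha}[\rho(\epsilon_m x+\epsilon_m z_m)^{-2\alpha}-\rho_\infty^{-2\alpha}]$; using continuity of $\rho$, the uniform bound $\rho\ge\rho_0$, and $w_m\to w$ in $L^2$, dominated convergence delivers $J_1^m\to -\tfrac{|S^{n-1}|}{4\alpha}\|w\|_{L^2}^2[\rho(\bar x)^{-2\alpha}-\rho_\infty^{-2\alpha}]$. For $J_2^m$, the substitution $u = x + y/\epsilon_m$ followed by $y\mapsto -y$ transforms the constraint into $\rho(\epsilon_m u + y + \epsilon_m z_m)\le|y|\le\rho_\infty$, and passing to the limit produces an integral of $\|w\|_{L^2}^2$ against $\frac{dy}{|y|^{n+2\alpha}}$ over $\{y:\rho(\bar x+y)\le|y|\le\rho_\infty\}$. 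The key combinatorial step is the decomposition
$$\{\rho(\bar x+y)\le|y|\le\rho_\infty\}=\mathcal{C}^-(\bar x)\cup\bigl[\{\rho(\bar x)\le|y|\le\rho_\infty\}\setminus\mathcal{C}^+(\bar x)\bigr],$$
which is valid up to null sets thanks to the smoothness of the interface guaranteed by $(\rho_2)$ and the definitions of $\mathcal{C}^\pm(\bar x)$. Summing $J_1^m+J_2^m$ in the limit and collecting terms then yields precisely $\|w\|_{L^2}^2\,\mathcal{H}(\bar x)$.

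For the cross term $J_3^m$, I bound the characteristic function of the shell by $\chi_{\{|y|\ge\rho_0\}}$ (since $\rho\ge\rho_0$), which gives
$$|J_3^m|\le\int_{|y|\ge\rho_0}\frac{1}{|y|^{n+2\alpha}}\biggl|\int_{\mathbb{R}^n}w_m(x+y/\epsilon_m)w_m(x)\,dx\biggr|\,dy.$$
For each fixed $y$ with $|y|\ge\rho_0$, the inner integral is bounded by $\|w_m\|_{L^2}^2$ uniformly in $m$ and tends to zero, since $|y|/\epsilon_m\to\infty$ combined with $w_m\to w$ in $L^2$ forces the vanishing of the $L^2$-inner product under arbitrarily large translations. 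Because $|y|^{-n-2\alpha}$ is integrable on $\{|y|\ge\rho_0\}$, dominated convergence yields $J_3^m\to 0$.

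Case (ii), $|\epsilon_m z_m|\to\infty$, uses the same three-piece decomposition, but now both diagonal contributions collapse: if $\rho_\infty<\infty$, $(\rho_1)$ forces $\rho(\epsilon_m x+\epsilon_m z_m)\to\rho_\infty$ and the shell of integration shrinks to a null set, while if $\rho_\infty=\infty$, hypothesis $(\rho_3)$ guarantees $\rho(\epsilon_m x+\epsilon_m z_m)\to+\infty$, driving the integrals to zero through the decay of $|y|^{-n-2\alpha}$; the cross term is handled exactly as above. The principal technical obstacle I anticipate is twofold: (a) exhibiting a uniform-in-$m$ integrable majorant to justify dominated convergence when $\rho_\infty=\infty$, which is exactly where $(\rho_3)$ must be exploited to prevent the lower bound of the shell from failing to grow; and (b) the careful bookkeeping of the combinatorial decomposition that matches the limit of $J_2^m$ with the $\mathcal{C}^\pm$-geometry defining $\mathcal{H}$.
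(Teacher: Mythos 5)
Your argument is correct and follows essentially the same route as the source this theorem is drawn from: the paper itself gives no proof (it simply defers to \cite{PFCT1}), and the argument there is exactly your computation --- rescale $z=y/\epsilon_m$ to extract the factor $\epsilon_m^{2\alpha}$, expand the square into two diagonal terms and a cross term, identify the limiting integration shells with $\mathcal{C}^{\pm}(\bar x)$ via the null-boundary property from $(\rho_2)$, and kill the cross term through the decay of the $L^2$ inner product under large translations. The only blemishes are cosmetic: in your bound for $J_3^m$ the absolute value must sit inside the $x$-integral (the shell indicator depends on $x$), which changes nothing since $\int |w_m(x+\tau)||w_m(x)|\,dx\to0$ as $|\tau|\to\infty$ uniformly in $m$; and in case (ii) the fact that $\rho(\epsilon_m x+\epsilon_m z_m)\to\rho_\infty$ is a consequence of the limit condition in $(\rho_1)$ rather than of $(\rho_3)$.
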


Now, we rescaling equation \equ{I01}, for this purpose we define  
$\rho_{\epsilon}(x) = \frac{1}{\epsilon} \rho (\epsilon x)$ and consider the rescaled equation
\begin{eqnarray}\label{C08}
(-\Delta)_{\rho_{\epsilon}}^{\alpha} v + v = \lambda v^{q} + u^{2_{\alpha}^{*} - 1}, &\mbox{in} \quad\mathbb{R}^{n}
\end{eqnarray}
and we see that  $u$ is a weak solution of 
 (\ref{I01}) if and only if  $v_{\epsilon}(x) = u(\epsilon x)$ is a weak solution of (\ref{C08}). 

In order to study equations (\ref{I01}) and (\ref{C08}),  we consider the functional $I_{\rho_{\epsilon}}$ on the   $\epsilon$-dependent Hilbert space  $H_{\rho_\epsilon}^{\alpha}(\mathbb{R}^{n})$ with inner product $\langle\cdot,\cdot\rangle_{\rho_\epsilon}$. 
The functional $I_{\rho_{\epsilon}}$ is of class $C^1$ in $H_{\rho_{\epsilon}}^{\alpha}(\mathbb{R}^{n})$ and 
 the critical points of $I_{\rho_{\epsilon}}$ are the  weak solutions of (\ref{C08}). We further introduce
$$
\mathcal{N}_{\rho_{\epsilon}} = \{v \in H_{\rho_{\epsilon}}^{\alpha}(\mathbb{R}^{n})\setminus \{0\}:\;\; I'_{\rho_{\epsilon}}(v)v =0\},
$$
$$
\Gamma_{\rho_{\epsilon}} = \{ \gamma \in C([0,1] , H_{\rho_{\epsilon}}^{\alpha}(\mathbb{R}^{n})):\;\; \gamma (0) = 0, \;\;I_{\rho_{\epsilon}}(\gamma (1)) <0 \}
$$
and the mountain pass minimax value
$$
C_{\rho_{\epsilon}} = \inf_{\gamma \in \Gamma_{\rho_{\epsilon}}} \max_{t \in [0,1]} I_{\rho_{\epsilon}}(\gamma (t)).
$$
From Lemma \ref{Glm3} we also have
\begin{equation}\label{C09}
0 < C_{\rho_{\epsilon}} = \inf_{v\in \mathcal{N}_{\rho_{\epsilon}}} I_{\rho_{\epsilon}}(v)= \inf_{v\in H_{\rho_{\epsilon}}^{\alpha}(\mathbb{R}^{n})\setminus \{0\}} \max_{t \geq 0} I_{\rho_{\epsilon}}(t v).
\end{equation}
For comparison purposes we consider the functional  $I$, whose critical points are the solutions of \equ{L01}. We also consider the critical value $C$ that satisfies 
$$
C=\inf_{u\in H^{\alpha}(\mathbb{R}^{n})\setminus \{0\}} \max_{t \geq 0} I(t u).
$$ 

Now we start the proof of  Theorem \ref{Itm2} with some preliminary lemmas.
\begin{Lem}\label{CClm02}
Suppose ($\rho_{1}$) holds. Then
\begin{equation}\label{C10}
\lim_{\epsilon \to 0^{+}} C_{\rho_{\epsilon}} = C.
\end{equation}
\end{Lem}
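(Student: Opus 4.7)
The plan is to prove the two inequalities $\limsup_{\epsilon \to 0^+} C_{\rho_\epsilon} \le C$ and $\liminf_{\epsilon \to 0^+} C_{\rho_\epsilon} \ge C$ separately. The quantitative engine throughout is the elementary estimate: for every $u \in H^{\alpha}(\mathbb{R}^{n})$ and every $\epsilon \in (0,1]$,
$$
0 \le I(u) - I_{\rho_\epsilon}(u) = \tfrac{1}{2}\int_{\mathbb{R}^{n}}\int_{|z| \ge \rho_\epsilon(x)} \frac{|u(x+z) - u(x)|^{2}}{|z|^{n+2\alpha}}\, dz\, dx \le \frac{|S^{n-1}|}{\alpha\,\rho_0^{2\alpha}}\, \epsilon^{2\alpha}\,\|u\|_{L^{2}}^{2}, \qquad (\star)
$$
which follows from $\rho_\epsilon(x) = \rho(\epsilon x)/\epsilon \ge \rho_0/\epsilon$, the bound $|u(x+z)-u(x)|^{2} \le 2(|u(x+z)|^{2}+|u(x)|^{2})$, and Fubini.

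From $(\star)$ the upper bound is immediate: $I_{\rho_\epsilon}(u) \le I(u)$ implies $\max_{t \ge 0} I_{\rho_\epsilon}(tu) \le \max_{t \ge 0} I(tu)$ for every $u \neq 0$, so the Nehari/mountain-pass characterization of Lemma~\ref{Glm3} and its analog for $C$ (recalled after \eqref{L01}) give $C_{\rho_\epsilon} \le C$ for every $\epsilon \in (0,1]$.

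For the lower bound, I would let $u_\epsilon$ be the positive ground state of $I_{\rho_\epsilon}$ furnished by Theorem~\ref{Gtm1} applied to the rescaled scope function $\rho_\epsilon$ (which still satisfies $(\rho_1)$, with lower bound $\rho_0/\epsilon \ge \rho_0$ for $\epsilon \le 1$), so that $u_\epsilon \in \mathcal{N}_{\rho_\epsilon}$ and $I_{\rho_\epsilon}(u_\epsilon) = C_{\rho_\epsilon}$. Using the identity
$$
C_{\rho_\epsilon} = \lambda\Bigl(\tfrac{1}{2}-\tfrac{1}{q+1}\Bigr)\|u_{\epsilon,+}\|_{L^{q+1}}^{q+1} + \tfrac{\alpha}{n}\|u_{\epsilon,+}\|_{L^{2_\alpha^*}}^{2_\alpha^*}
$$
together with $C_{\rho_\epsilon} \le C$ and the Nehari identity $\|u_\epsilon\|_{\rho_\epsilon}^{2} = \lambda\|u_{\epsilon,+}\|_{L^{q+1}}^{q+1} + \|u_{\epsilon,+}\|_{L^{2_\alpha^*}}^{2_\alpha^*}$, one obtains $\|u_\epsilon\|_{\rho_\epsilon} \le M$ uniformly in $\epsilon$. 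Because the Sobolev constants of Remark~\ref{FSnta1} depend only on $\rho_0$ and hence are uniform in $\epsilon \le 1$, the argument of Lemma~\ref{Glm2} also yields a uniform lower bound $\|u_\epsilon\|_{\rho_\epsilon} \ge \delta > 0$, and combining with $(\star)$ gives a uniform bound on $\|u_\epsilon\|_\alpha$ and $\|u_\epsilon\|_{L^{2}}$. Next, let $\tau_\epsilon > 0$ be the unique positive number with $\tau_\epsilon u_\epsilon \in \mathcal{N}$ (the Nehari manifold for $I$); comparing the Nehari identities for $I$ at $\tau_\epsilon u_\epsilon$ and for $I_{\rho_\epsilon}$ at $u_\epsilon$ and using $(\star)$, together with the fact that $\|u_{\epsilon,+}\|_{L^{q+1}}$ and $\|u_{\epsilon,+}\|_{L^{2_\alpha^*}}$ stay bounded away from zero, shows that $\{\tau_\epsilon\}$ is bounded (indeed $\tau_\epsilon \to 1$). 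Since $u_\epsilon \in \mathcal{N}_{\rho_\epsilon}$ gives $I_{\rho_\epsilon}(\tau_\epsilon u_\epsilon) \le \max_{t \ge 0} I_{\rho_\epsilon}(tu_\epsilon) = I_{\rho_\epsilon}(u_\epsilon) = C_{\rho_\epsilon}$, applying $(\star)$ to $\tau_\epsilon u_\epsilon$ yields
$$
C \le I(\tau_\epsilon u_\epsilon) \le I_{\rho_\epsilon}(\tau_\epsilon u_\epsilon) + \frac{|S^{n-1}|}{\alpha\,\rho_0^{2\alpha}}\,\tau_\epsilon^{2}\,\epsilon^{2\alpha}\,\|u_\epsilon\|_{L^{2}}^{2} \le C_{\rho_\epsilon} + O(\epsilon^{2\alpha}),
$$
and passing to the limit $\epsilon \to 0^+$ closes the argument.

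The delicate point is the uniform control on the ground states $u_\epsilon$ as $\epsilon \to 0^+$, namely the uniform lower bound on $\|u_\epsilon\|_{\rho_\epsilon}$ and the boundedness of the Nehari projection scalars $\tau_\epsilon$. Both reduce to the uniformity of the Sobolev constants for the family $H_{\rho_\epsilon}^{\alpha}$, which is ensured by $\rho_\epsilon \ge \rho_0$.
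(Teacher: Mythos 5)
Your upper bound is exactly the paper's: $I_{\rho_\epsilon}\le I$ plus the inf--max characterizations give $C_{\rho_\epsilon}\le C$. For the lower bound, however, you take a genuinely different route. The paper uses monotonicity in the scope to write $C_{\rho_\epsilon}\ge C_{\rho_0/\epsilon}$, reducing to the constant-scope functional, and then asserts by ``standard arguments'' that $C_{\rho_0/\epsilon}\to C$; the actual convergence mechanism is left implicit. You instead argue directly on the variable-scope problem: take the ground state $u_\epsilon$ of $I_{\rho_\epsilon}$, derive uniform upper and lower bounds on $\|u_\epsilon\|_{\rho_\epsilon}$ (correctly noting that the embedding constants of Remark~\ref{FSnta1} are uniform for $\epsilon\le 1$ because $\rho_\epsilon\ge\rho_0$), project onto the Nehari manifold of $I$ with a bounded scalar $\tau_\epsilon$, and close with the quantitative tail estimate $(\star)$ to get $C\le C_{\rho_\epsilon}+O(\epsilon^{2\alpha})$. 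This buys an explicit rate and a self-contained argument where the paper waves at ``standard arguments''; the cost is that you lean on Theorem~\ref{Gtm1} to produce $u_\epsilon$ for every small $\epsilon$ (which is legitimate and non-circular here, though the same argument runs with near-minimizers on $\mathcal{N}_{\rho_\epsilon}$ and would avoid that dependence). One small overstatement: you do not actually need $\|u_{\epsilon,+}\|_{L^{q+1}}$ and $\|u_{\epsilon,+}\|_{L^{2_\alpha^*}}$ \emph{each} bounded away from zero to control $\tau_\epsilon$; the Nehari identity only gives $\lambda\|u_{\epsilon,+}\|_{L^{q+1}}^{q+1}+\|u_{\epsilon,+}\|_{L^{2_\alpha^*}}^{2_\alpha^*}=\|u_\epsilon\|_{\rho_\epsilon}^2\ge\delta^2$, but that combined lower bound already forces $\min\bigl(\tau_\epsilon^{q-1},\tau_\epsilon^{2_\alpha^*-2}\bigr)\le M/\delta^2$ and hence the boundedness of $\tau_\epsilon$, so the argument stands.
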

\begin{proof} Since we obviously have 
$$
\int_{\mathbb{R}^{n}}\int_{B(0,\rho_\epsilon (x))} \frac{|u(x+z) - u(x)|^{2}}{|z|^{n+2\alpha}}dzdx \leq \int_{\mathbb{R}^{n}}\int_{\mathbb{R}^{n}} \frac{|u(x+z) - u(x)|^{2}}{|z|^{n+2\alpha}}dzdx,
$$
for all $u\in H_{\rho_{\epsilon}}^{\alpha}(\mathbb{R}^{n})$, then   we have
$
I_{\rho_{\epsilon}} (u) \leq I(u)
$
and therefore
\begin{equation}\label{C11}
\limsup_{\epsilon \to 0^{+}} C_{\rho_{\epsilon}} \leq C.
\end{equation}
On the other hand, by ($\rho_1$) we have $\rho (\epsilon x) \geq \rho_0$ for all $x\in \mathbb{R}^n$ then 
$$
C_{\rho_\epsilon} \geq C_{\frac{\rho_0}{\epsilon}}
$$
By standard arguments we can show that 
$$
\lim_{\epsilon \to 0}C_{\frac{\rho_0}{\epsilon}} = C.
$$
Thus
\begin{equation}\label{C12}
\liminf_{\epsilon \to 0} C_{\rho_\epsilon} \geq C.
\end{equation}
Therefore, by (\ref{C11}) and (\ref{C12}) we obtain (\ref{C10}).
\end{proof}
\begin{Lem}\label{CClm03}
There are a $\epsilon_0>0$, a family $y_\epsilon \subset \mathbb{R}^n$, constants $\beta , R>0$ such that 
\begin{equation}\label{C13}
\int_{B(y_\epsilon , R)} v_{\epsilon}^2dx\geq \beta,\;\;\mbox{for all}\;\;\epsilon \in [0, \epsilon_0]. 
\end{equation}
\end{Lem}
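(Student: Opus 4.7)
The strategy is to argue by contradiction via Lions' vanishing alternative. Suppose the conclusion fails: then there exists a sequence $\epsilon_k \to 0^+$ along which $\sup_{y\in \mathbb{R}^n}\int_{B(y,R)}v_{\epsilon_k}^2\,dx\to 0$ for every $R>0$. Here $v_\epsilon\in \mathcal{N}_{\rho_\epsilon}$, $v_\epsilon\ge 0$, is the rescaled ground state of \equ{C08} obtained from Theorem \ref{Gtm1}, so $I_{\rho_\epsilon}(v_\epsilon)=C_{\rho_\epsilon}$.

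First I would prove that $\{v_{\epsilon_k}\}$ is bounded in $H^\alpha(\mathbb{R}^n)$ uniformly in $k$. From the Nehari identity combined with
$$C_{\rho_\epsilon}=I_{\rho_\epsilon}(v_\epsilon)-\tfrac{1}{2} I'_{\rho_\epsilon}(v_\epsilon)v_\epsilon=\lambda\Bigl(\tfrac{1}{2}-\tfrac{1}{q+1}\Bigr)\|v_\epsilon\|_{L^{q+1}}^{q+1}+\tfrac{\alpha}{n}\|v_\epsilon\|_{L^{2_\alpha^*}}^{2_\alpha^*}$$
and $C_{\rho_\epsilon}\to C$ (Lemma \ref{CClm02}), both terms on the right and hence $\|v_\epsilon\|_{\rho_\epsilon}$ stay bounded. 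The elementary tail estimate
$$\int_{\mathbb{R}^n}\int_{|z|>\rho_\epsilon(x)}\frac{|v(x+z)-v(x)|^2}{|z|^{n+2\alpha}}\,dz\,dx\le \frac{2|S^{n-1}|}{\alpha\rho_0^{2\alpha}}\epsilon^{2\alpha}\|v\|_{L^2}^2,$$
which uses $\rho_\epsilon(x)\ge \rho_0/\epsilon$, then gives $\|v_{\epsilon_k}\|_\alpha\le C$ uniformly. Lemma \ref{CClem} applied to this bounded vanishing sequence produces $v_{\epsilon_k}\to 0$ strongly in $L^{q+1}(\mathbb{R}^n)$.

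Second, substituting $\|v_{\epsilon_k}\|_{L^{q+1}}^{q+1}\to 0$ back into the two displayed identities yields
$$\|v_{\epsilon_k}\|_{\rho_{\epsilon_k}}^2=\|v_{\epsilon_k}\|_{L^{2_\alpha^*}}^{2_\alpha^*}+o(1),\qquad \|v_{\epsilon_k}\|_{L^{2_\alpha^*}}^{2_\alpha^*}\to \tfrac{n}{\alpha}C.$$
Combining the fractional Sobolev inequality $S\|w\|_{L^{2_\alpha^*}}^2\le [w]_{H_0^\alpha}^2$ with the same tail estimate gives $S\|v_{\epsilon_k}\|_{L^{2_\alpha^*}}^2\le \|v_{\epsilon_k}\|_{\rho_{\epsilon_k}}^2+O(\epsilon_k^{2\alpha})$, so in the limit $S\bigl(\tfrac{n}{\alpha}C\bigr)^{2/2_\alpha^*}\le \tfrac{n}{\alpha}C$, i.e.\ $C\ge \tfrac{\alpha}{n}S^{n/(2\alpha)}$. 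This contradicts the strict upper bound $C<\tfrac{\alpha}{n}S^{n/(2\alpha)}$ from \equ{L02} and Remark \ref{Gnta2}.

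The main obstacle is ensuring that each ingredient (Sobolev embedding, Nehari bookkeeping, Lions' lemma) holds with constants independent of $\epsilon$. The crucial observation enabling uniformity is that $\rho_\epsilon(x)\ge \rho_0/\epsilon\to \infty$, so $\|\cdot\|_{\rho_\epsilon}$ differs from $[\cdot]_{H_0^\alpha}$ only by a tail of order $\epsilon^{2\alpha}\|v\|_{L^2}^2$; this negligible discrepancy is exactly what lets the classical critical-exponent obstruction $\tfrac{\alpha}{n}S^{n/(2\alpha)}$ be exploited to rule out vanishing.
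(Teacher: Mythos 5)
Your proposal is correct and follows essentially the same route as the paper: contradiction via the Lions-type vanishing lemma (Lemma \ref{CClem}) to kill the $L^{q+1}$ term, the Nehari and energy identities to show the limit level is $\tfrac{n}{\alpha}C$, and the Sobolev constant $S$ to force $C\ge \tfrac{\alpha}{n}S^{n/2\alpha}$, contradicting \equ{L02}. Your extra care with the tail estimate $O(\epsilon^{2\alpha})\|v\|_{L^2}^2$ relating $\|\cdot\|_{\rho_\epsilon}$ to the full Gagliardo seminorm only makes explicit a step the paper leaves implicit.
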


\begin{proof}
By contradiction, there is a sequence $\epsilon_m \to 0$ such that for all $R>0$
$$
\lim_{m\to \infty} \sup_{y\in \mathbb{R}^n} \int_{B(y,R)} v_{\epsilon_m}^{2}dx = 0
$$
Using the following notation $v_m = v_{\epsilon_m}$ and $C_{\rho_m} = C_{\rho_{\epsilon_m}}$, by Lemma \ref{CClem}
$$
\int_{\mathbb{R}^n} v_m^{q+1}dx = o_{m}(1).
$$
Furthermore, since $I'_{\rho_m}(v_m)v_m = 0$ then 
$$
\int_{\mathbb{R}^n}\int_{B(0, \rho_m (x))}\frac{|v_m(x+z) - v_m(x)|^2}{|z|^{n+2\alpha}}dz dx + \int_{\mathbb{R}^n} v_m^2dx = \int_{\mathbb{R}^n} v_m^{2_{\alpha}^{*}}dx + o_m(1).
$$
Let $l\geq 0$ be such that 
$$
\int_{\mathbb{R}^n}\int_{B(0, \rho_m(x))} \frac{|v_m(x+z) - v_m(x)|^2}{|z|^{n+2\alpha}} dz dx + \int_{\mathbb{R}^n} v_m^2dx \to l.
$$
Now, since $I_{\rho_m}(v_m) = C_{\rho_m}$, we obtain 
$$
C_{\rho_m} = \frac{\alpha}{n}\left( \int_{\mathbb{R}^n}\int_{B(0, \rho_m(x))}\frac{|v_m(x+z) - v_m(x)|^2}{|z|^{n+2\alpha}}dz dx + \int_{\mathbb{R}^n}v_m^2dx \right) + o_m(1),
$$ 
then by Lemma \ref{CClm02}
\begin{equation}\label{C14}
l = \frac{n}{\alpha}C
\end{equation}
hence $l>0$: Now, using the definition of the Sobolev constant $S$ and Remark \ref{FSnta1}, we have
$$
\left( \int_{\mathbb{R}^n}v_m^{2_{\alpha}^{*}}dx \right)^{2/2_{\alpha}^{*}} S \leq \int_{\mathbb{R}^n}\int_{B(0, \rho_m(x))} \frac{|v_m(x+z) - v_m(x)|^2}{|z|^{n+2\alpha}}dz dx + \int_{\mathbb{R}^n}v_m^2dx
$$
Therefore, by (\ref{C14})  and taking the limit in the above inequality as $m\to \infty$ we achieved that
$$
C\geq \frac{\alpha}{n}S^{n/2\alpha}
$$
which is a contradiction with (\ref{L02}).
\end{proof}


Now let 
\begin{equation}\label{C15}
w_{\epsilon}(x) = v_{\epsilon}(x + y_{\epsilon}) = u_{\epsilon}(\epsilon x + \epsilon y_{\epsilon}),
\end{equation} 
then by (\ref{C13}),
\begin{equation}\label{C16}
\liminf_{\epsilon \to 0^{+}}\int_{B(0,R)} w_{\epsilon}^{2}(x)dx \geq \beta > 0.
\end{equation}
To continue, we consider the rescaled scope function $\overline\rho_\epsilon$, as defined in \equ{C04}, 
$$
\bar\rho_\epsilon(x)=\frac{1}{\epsilon}\rho(\epsilon x+\epsilon y_\epsilon)
$$
and then $w_\epsilon$ satisfies the equation 
\begin{equation}\label{C17}
(-\Delta)_{\overline{\rho}_{\epsilon}}^{\alpha}w_{\epsilon}(x) + w_{\epsilon}(x) = w_{\epsilon}^{p}(x), \;\;\mbox{in}\;\;\mathbb{R}^{n}.
\end{equation}

Now we   prove  the convergence of $w_\epsilon$ as $\epsilon\to 0$.
\begin{Lem}\label{CClm04} 
For every sequence $\{\epsilon_m\}$ there is a subsequence, we keep calling the same, so
that 
$w_{\epsilon_m}=w_{m} \to w$ in $H^{\alpha}(\mathbb{R}^{n})$, when $m\to \infty$, where $w$ is a solution of \equ{L01}.
\end{Lem}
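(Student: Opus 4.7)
\textbf{Proof plan for Lemma \ref{CClm04}.} The plan has three steps: first extract a weak limit $w$ of $\{w_m\}$ in $H^\alpha(\mathbb{R}^n)$; second identify $w$ as a weak solution of \eqref{L01} by passing to the limit in the rescaled equation \eqref{C17}; third upgrade weak convergence to strong convergence, using the strict level bound of Remark \ref{Gnta2} to exclude concentration.

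\emph{Step 1: boundedness and weak limit.} By Lemma \ref{CClm02}, $C_{\rho_{\epsilon_m}} \to C$, so the energies $I_{\bar\rho_m}(w_m) = I_{\rho_{\epsilon_m}}(v_{\epsilon_m}) \to C$ are bounded. Combining the identity
$$
I_{\bar\rho_m}(w_m) - \tfrac{1}{2} I'_{\bar\rho_m}(w_m)\,w_m = \lambda\bigl(\tfrac{1}{2} - \tfrac{1}{q+1}\bigr)\|w_{m+}\|_{L^{q+1}}^{q+1} + \tfrac{\alpha}{n}\|w_{m+}\|_{L^{2_\alpha^*}}^{2_\alpha^*}
$$
with $I'_{\bar\rho_m}(w_m) w_m = 0$ controls $\|w_{m+}\|_{L^{q+1}}$ and $\|w_{m+}\|_{L^{2_\alpha^*}}$, and then the Nehari-type identity yields a uniform bound on $\|w_m\|_{\bar\rho_m}$. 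Since $\bar\rho_m(x) \geq \rho_0/\epsilon_m$, Proposition \ref{PFCTprop} transfers this to a uniform bound $\|w_m\|_\alpha \leq K$. Passing to a subsequence, $w_m \rightharpoonup w$ in $H^\alpha(\mathbb{R}^n)$, with $w_m \to w$ a.e.\ and in $L^p_{\mathrm{loc}}$ for $p \in [2,2_\alpha^*)$; inequality \eqref{C16} then forces $\int_{B(0,R)}w^2 \geq \beta > 0$, so $w \neq 0$.

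\emph{Step 2: $w$ solves \eqref{L01}.} Fix $\varphi \in C_0^\infty(\mathbb{R}^n)$ and pass to the limit in $I'_{\bar\rho_m}(w_m)\varphi = 0$. The crucial point is the regional bilinear form
$$
\int_{\mathbb{R}^n}\int_{B(0,\bar\rho_m(x))} \frac{[w_m(x+z) - w_m(x)][\varphi(x+z) - \varphi(x)]}{|z|^{n+2\alpha}}\,dz\,dx.
$$
Since $\bar\rho_m(x) \geq \rho_0/\epsilon_m \to \infty$ uniformly in $x$, the domain exhausts $\mathbb{R}^n \times \mathbb{R}^n$, and the tail beyond $|z| = \rho_0/\epsilon_m$ is bounded by $C\,\epsilon_m^{2\alpha}\|w_m\|_{L^2}\|\varphi\|_{L^2}$; together with $w_m \rightharpoonup w$ and the uniform $H^\alpha$-bound, this integral converges to the full Gagliardo pairing of $w$ and $\varphi$. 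The subcritical term $\lambda\int w_{m+}^q\varphi$ passes by local compactness, and the critical term $\int w_{m+}^{2_\alpha^*-1}\varphi$ by a.e.\ convergence combined with uniform $L^{2_\alpha^*}$-boundedness on $\mathrm{supp}\,\varphi$. Hence $I'(w)\varphi = 0$ for every test $\varphi$, so $w$ is a nontrivial weak solution of \eqref{L01}.

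\emph{Step 3: strong convergence, the main obstacle.} At critical growth, weak convergence does not upgrade automatically. Set $z_m = w_m - w$. The Brezis--Lieb lemma and a.e.\ convergence give
$$
\|w_m\|_\alpha^2 = \|w\|_\alpha^2 + \|z_m\|_\alpha^2 + o(1), \quad \|w_{m+}\|_{L^{2_\alpha^*}}^{2_\alpha^*} = \|w_+\|_{L^{2_\alpha^*}}^{2_\alpha^*} + \|z_{m+}\|_{L^{2_\alpha^*}}^{2_\alpha^*} + o(1).
$$
Using that $\|w_m\|_\alpha^2 - \|w_m\|_{\bar\rho_m}^2 \to 0$ (the missing regional tail is $O(\epsilon_m^{2\alpha})$ by the uniform $L^2$-bound), subtracting the equation for $w$ from the asymptotic equation for $w_m$ yields $\|z_m\|_\alpha^2 - \|z_{m+}\|_{L^{2_\alpha^*}}^{2_\alpha^*} = o(1)$. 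If $\|z_m\|_\alpha^2 \to b > 0$, the Sobolev inequality forces $b \geq S^{n/(2\alpha)}$, while the energy decomposition
$$
C = \lim_m I_{\bar\rho_m}(w_m) = I(w) + \tfrac{\alpha}{n}\,b + o(1) \geq \tfrac{\alpha}{n}\,S^{n/(2\alpha)},
$$
which (since $I(w) \geq 0$ for a nontrivial solution of \eqref{L01}) contradicts Remark \ref{Gnta2}. Therefore $b = 0$ and $w_m \to w$ strongly in $H^\alpha(\mathbb{R}^n)$. The hardest part is this third step, where the critical-exponent nonlinearity could in principle absorb a Talenti bubble of mass $S^{n/(2\alpha)}$; the strict inequality of Remark \ref{Gnta2} is exactly what rules this out.
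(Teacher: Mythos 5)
Your Steps 1 and 2 are correct and usefully explicit: the paper's own proof of this lemma silently assumes that the weak limit $w$ is a nontrivial solution of \equ{L01} (it needs $w\in\mathcal{N}$ in order to invoke $C\le I(w)$), and your $O(\epsilon_m^{2\alpha})$ tail estimate for the regional bilinear form is exactly the right way to justify that identification. Where you genuinely diverge from the paper is Step 3: the paper performs no Brezis--Lieb decomposition and no bubble exclusion. Instead it squeezes $C\le I(w)\le\liminf_m\Lambda_m\le\limsup_m\Lambda_m\le C$, where $\Lambda_m$ is the quantity $I_{\bar\rho_m}(w_m)-\frac{1}{q+1}I'_{\bar\rho_m}(w_m)w_m$ with the Gagliardo integral truncated to $B(0,\rho_0/\epsilon_m)$; the first inequality uses $w\in\mathcal{N}$ and $C=\inf_{\mathcal{N}}I$, the middle ones use Fatou and weak lower semicontinuity, and equality throughout upgrades weak to strong convergence.

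As written, your Step 3 has a concrete gap. Brezis--Lieb applies to the subcritical term as well, so subtracting the Nehari identity for $w$ from the asymptotic one for $w_m$ gives
$$
\|z_m\|_{\alpha}^{2}=\lambda\|z_{m+}\|_{L^{q+1}}^{q+1}+\|z_{m+}\|_{L^{2_{\alpha}^{*}}}^{2_{\alpha}^{*}}+o(1),
$$
not $\|z_m\|_{\alpha}^{2}-\|z_{m+}\|_{L^{2_{\alpha}^{*}}}^{2_{\alpha}^{*}}=o(1)$. The remainder $\|z_{m+}\|_{L^{q+1}}$ need not vanish: $q+1<2_{\alpha}^{*}$ only gives \emph{local} compactness, and $L^{q+1}$ mass can escape to infinity; the same omission infects your energy decomposition $C=I(w)+\frac{\alpha}{n}b+o(1)$. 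The repair is to use the stronger fact already available from your Step 2, namely $I(w)\ge C$ (not merely $I(w)\ge 0$): write $C+o(1)=\lambda\bigl(\frac12-\frac{1}{q+1}\bigr)\|w_{m+}\|_{L^{q+1}}^{q+1}+\frac{\alpha}{n}\|w_{m+}\|_{L^{2_{\alpha}^{*}}}^{2_{\alpha}^{*}}$, apply Brezis--Lieb to both terms, and obtain $C\ge I(w)+(\text{non-negative remainders})\ge C$, which forces both remainders to zero; the displayed identity then gives $\|z_m\|_{\alpha}\to 0$. Note that this repair renders the Sobolev-constant/bubble argument and Remark \ref{Gnta2} unnecessary at this stage (the strict level bound is needed earlier, in Lemma \ref{CClm03}, to secure nontriviality of the limit), and it essentially collapses your route back onto the paper's.
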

\begin{proof}
Note that
$$
\begin{aligned}
C_{\rho_m} &= I_{\rho_m}(v_m) - \frac{1}{q+1}I'_{\rho_m}(v_m)v_m\\
&= \left( \frac{1}{2}-\frac{1}{q+1} \right)\left( \int_{\mathbb{R}^n}\int_{B(0, \frac{1}{\epsilon_m}\rho(\epsilon_m x))}\frac{|v_m(x+z) - v_m(x)|^2}{|z|^{n+2\alpha}}dz dx + \int_{\mathbb{R}^n}v_m^2(x)dx \right)\\
& + \left( \frac{1}{q+1} - \frac{1}{2_{\alpha}^{*}} \right) \int_{\mathbb{R}^n}v_{m}^{2_{\alpha}^{*}}(x)dx\\
&\geq \left( \frac{1}{2}-\frac{1}{q+1} \right)\left( \int_{\mathbb{R}^n}\int_{B(0, \frac{\rho_0}{\epsilon_m})}\frac{|v_m(x+z) - v_m(x)|^2}{|z|^{n+2\alpha}}dz dx + \int_{\mathbb{R}^n}v_m^2(x)dx \right)\\
& + \left( \frac{1}{q+1} - \frac{1}{2_{\alpha}^{*}} \right) \int_{\mathbb{R}^n}v_{m}^{2_{\alpha}^{*}}(x)dx = \Lambda_m
\end{aligned}
$$
By Lemma \ref{CClm02} we obtain that 
\begin{equation}\label{C18}
\limsup_{m\to \infty} \Lambda_m \leq C.
\end{equation}
On the other hand, by Fatou's Lemma and the weak convergence of $\{w_m\}$, we get 
\begin{equation}\label{C19}
\begin{aligned}
C&\leq I(w)\\
& = \left( \frac{1}{2} - \frac{1}{q+1} \right)\left( \int_{\mathbb{R}^n}\int_{\mathbb{R}^n} \frac{|w(x+z) - w(x)|^2}{|z|^{n+2\alpha}} dz dx + \int_{\mathbb{R}^n}w^2dx \right) + \left( \frac{1}{q+1}-\frac{1}{2_{\alpha}^{*}} \right)\int_{\mathbb{R}^n}w^{2_{\alpha}^{*}}dx\\
&\leq \liminf_{m\to \infty} \Lambda_m + \liminf_{m\to \infty} \left( \frac{1}{2}-\frac{1}{q+1} \right)\int_{\mathbb{R}^n}\int_{\mathbb{R}^n \setminus B(0, \frac{\rho_0}{\epsilon_m})} \frac{|w_m(x+z) - w_m(x)|^2}{|z|^{n+2\alpha}}dz dx\\
&= \liminf_{m\to \infty} \Lambda_m. 
\end{aligned}
\end{equation}
So, by (\ref{C18}) and (\ref{C19}), $\lim_{m\to \infty}\Lambda_m = C$, from where we get
$$
\lim_{m\to \infty} \|w_m - w\|_{\alpha} = 0.
$$

\end{proof}
\smallskip

We are now in a position to complete the proof of our second main theorem.

\smallskip

\noindent
{\bf Proof of Theorem \ref{}} We first obtain an upper bound for the critical values $C_{\rho_{\epsilon_m}}=C_m$, for the sequence $\{\epsilon_m\}$ given in Lemma \ref{CClm04}.
Next we consider the scope function
$$
\tilde\rho_m(x)=\frac{1}{\epsilon_m}\rho(\epsilon_m x+x_0),
$$ 
where  $x_0$ is a global minimum point of $\mathcal{H}$, see Lemma \ref{CClm01}. To continue, we consider the function $w_m=w_{\epsilon_m}$ as given in \equ{C15} and  let $t_m>0$ such that $t_m w_m\in \mathcal{N}_{\tilde\rho_m}$. According 
to Lemma \ref{CClm04}, $\{w_m\}$ converges to $w\in\mathcal{N}$, then $t_m\to 1$ and $t_mw_m\to w$.

 Now we apply Theorem \ref{CCtm01} to obtain that
\begin{equation}\label{comp1}
C_m\le I_{\tilde\rho_m}(t_m w_m)=I_{\frac{\rho_\infty}{\epsilon_m}}(t_{m}{w}_{m}) + \epsilon_{m}^{2\alpha}\left(  \|{w}\|_{L^{2}}^{2}\mathcal{H}(x_{0}) + o(1)\right).
\end{equation}
We have used part (i) of Theorem \ref{CCtm01} with $z_m=x_0/{\epsilon_m}$.

On the other hand, since  $w_{m} \in H^{\alpha}(\mathbb{R}^{n})$ is a critical point of $I_{\bar\rho_m}$, we have that
\begin{equation}\label{comp2}
C_m=I_{\bar\rho_m}(w_m)\ge I_{\bar\rho_m}(t_mw_m).
\end{equation}
We write $y_m=y_{\epsilon_m}$. If $\epsilon_m|y_m|\to \infty$,
then we may apply part (ii) of Theorem \ref{CCtm01} with $z_m=y_m$ in \equ{comp2} and obtain that
$$
C_m\ge I_{\frac{\rho_\infty}{\epsilon_m}}(t_{m}{w}_{m}) + \epsilon_{m}^{2\alpha} o(1),
$$
which contradicts \equ{comp1}. We conclude then, that $\{\epsilon_my_m\}$ is bounded and that, for a subsequence, $\epsilon_my_m\to \bar x$, for some $\bar x\in \R^n$. Now we apply Theorem \ref{CCtm01} again, but now part (i) with $z_m=y_m$ in \equ{comp2}, and we obtain that
\begin{equation}\label{comp3}
C_m\ge I_{\frac{\rho_\infty}{\epsilon_m}}(t_{m}{w}_{m}) + \epsilon_{m}^{2\alpha}\left(  \|{w}\|_{L^{2}}^{2}\mathcal{H}(\bar x) + o(1)\right).
\end{equation}
From \equ{comp1} and \equ{comp3} we finally get that 
$$
\|w\|_{L^{2}}^{2}\mathcal{H}(\overline{x}) + o(1) \leq \|w\|_{L^{2}}^{2} \mathcal{H}(x_{0}) + o(1)
$$
and taking  the limit as $m\to\infty$, we get
\begin{equation}\label{Ceq38}
\mathcal{H}(\overline{x}) \leq \mathcal{H}(x_{0})
\end{equation}
completing the proof of the theorem. $\Box$


\begin{thebibliography}{99}

\bibitem{ACNT}A. Cotsiolis and N. Tavoularis, \emph{Best constants for Sobolev inequalities for higher order fractional derivatives}, J. Math. Anal. Appl. {\bf 295}, 225-236 (2004).

\bibitem{EDNGPEV}E. Di Nezza, G. Palatucci and E. Valdinoci, \emph{Hitchhiker's guide to the fractional Sobolev spaces}, Bull. Sci. math. {\bf 136}, 521-573 (2012).

\bibitem{PFCT1}P. Felmer and C. Torres, \emph{``Non-linear Schr\"odinger equation with non-local regional diffusion''}. Calc. Var. Partial Diff. Equ. {\bf 54}, 75-98 (2015).

\bibitem{XSJZ}X. Shang and J. Zhang, \emph{``Ground states for fractional Schr\"odinger equations with critical growth''}, Nonlinearity {\bf 27}, 187-207 (2014). 

\bibitem{XSJZYY}X. Shang, J. Zhang and Y. Yang, \emph{``On fractional Schr\''odinger equation in $\mathbb{R}^n$ with critical growth''}, J. Math. Pays. {\bf 54}, 121502 (2013).

\bibitem{MW}M. Willem, \emph{``Minimax Theorems''}, Birkh\"auser, Bosten, 1996.
\end{thebibliography}
\end{document}